\documentclass[11pt,twoside,reqno,centertags,draft]{amsart}
\usepackage{amsfonts}
\usepackage{color,enumitem,graphicx}
\usepackage[colorlinks=true,urlcolor=blue,
citecolor=red,linkcolor=blue,linktocpage,pdfpagelabels,
bookmarksnumbered,bookmarksopen]{hyperref}
 % \def\publname{}
 % \def\volinfo{\tiny
%Advances in Differential Equations
  %  \hfill
%Volume 9, Number xxx,  xxx, 2004}
%\def\pageinfo{Pages xx--xx}
 % \PII{}
%\copyrightinfo{}{}

\setcounter{page}{1}
  \usepackage{amsmath,amsthm,amsfonts,amssymb}

  \pagestyle{myheadings}
\thispagestyle{empty}
    \textwidth = 6 true in
    \textheight = 9.    true in

  \voffset= -20 true pt
  \oddsidemargin=0true in
  \evensidemargin=0true in

\begin{document}

\title{fractional Hardy-Sobolev elliptic problems}
\date{}
\maketitle

\vspace{ -1\baselineskip}

{\small
\begin{center}
  {\sc  Jianfu Yang} \\
Department of Mathematics,
Jiangxi Normal University\\
Nanchang, Jiangxi 330022,
P.~R.~China\\
email: jfyang\_2000@yahoo.com\\[10pt]
{\sc  Xiaohui Yu} \\
The Center for China's Overseas Interests,
Shenzhen University\\
Shenzhen Guangdong, 518060,
P.~R.~China\\
email: yuxiao\_211@163.com\\[10pt]

 % (Submitted by: Jean Mawhin)
\end{center}
}

\renewcommand{\thefootnote}{}
%\footnote{Accepted for publication: January 2004.}
%\footnote{*Corresponding author.}
%\footnote{AMS Subject Classifications: 35J25, 47G30, 35B45,  35J70.}
\footnote{Key words: Critical Hardy-Sobolev exponent, decaying law, existence.}

\begin{quote}
%\footnotesize
{\bf Abstract.} In this paper, we study the following singular nonlinear elliptic problem
\begin{equation}\label{eq:1}
 \left\{
  \begin{array}{ll}
  \displaystyle
(-\Delta)^{\frac \alpha 2} u=\lambda
|u|^{r-2}u+\mu\frac{|u|^{q-2}u}{|x|^{s}}\quad &{\rm in }\quad \Omega,    \\
\\ u=0 &{\rm on }\quad
\partial\Omega,
\end{array}
\right.
 \end{equation}
where $\Omega$ is a smooth bounded domain in $\mathbb R^N$ with
$0\in \Omega$, $\lambda,\mu>0,0<s\leq\alpha$, $(-\Delta)^{\frac
\alpha 2}$ is the fractional Laplacian operator with $0<\alpha<2$. We establish existence results of problem \eqref{eq:1} for
subcritical, Sobolev critical and Hardy-Sobolev critical cases.
\end{quote}

\newcommand{\N}{\mathbb{N}}
\newcommand{\R}{\mathbb{R}}
\newcommand{\Z}{\mathbb{Z}}

\newcommand{\cA}{{\mathcal A}}
\newcommand{\cB}{{\mathcal B}}
\newcommand{\cC}{{\mathcal C}}
\newcommand{\cD}{{\mathcal D}}
\newcommand{\cE}{{\mathcal E}}
\newcommand{\cF}{{\mathcal F}}
\newcommand{\cG}{{\mathcal G}}
\newcommand{\cH}{{\mathcal H}}
\newcommand{\cI}{{\mathcal I}}
\newcommand{\cJ}{{\mathcal J}}
\newcommand{\cK}{{\mathcal K}}
\newcommand{\cL}{{\mathcal L}}
\newcommand{\cM}{{\mathcal M}}
\newcommand{\cN}{{\mathcal N}}
\newcommand{\cO}{{\mathcal O}}
\newcommand{\cP}{{\mathcal P}}
\newcommand{\cQ}{{\mathcal Q}}
\newcommand{\cR}{{\mathcal R}}
\newcommand{\cS}{{\mathcal S}}
\newcommand{\cT}{{\mathcal T}}
\newcommand{\cU}{{\mathcal U}}
\newcommand{\cV}{{\mathcal V}}
\newcommand{\cW}{{\mathcal W}}
\newcommand{\cX}{{\mathcal X}}
\newcommand{\cY}{{\mathcal Y}}
\newcommand{\cZ}{{\mathcal Z}}

\newcommand{\abs}[1]{\lvert#1\rvert}
\newcommand{\xabs}[1]{\left\lvert#1\right\rvert}
\newcommand{\norm}[1]{\lVert#1\rVert}

\newcommand{\loc}{\mathrm{loc}}
\newcommand{\p}{\partial}
\newcommand{\h}{\hskip 5mm}
\newcommand{\ti}{\widetilde}
\newcommand{\D}{\Delta}
\newcommand{\e}{\epsilon}
\newcommand{\bs}{\backslash}
\newcommand{\ep}{\emptyset}
\newcommand{\su}{\subset}
\newcommand{\ds}{\displaystyle}
\newcommand{\ld}{\lambda}
\newcommand{\vp}{\varphi}
\newcommand{\wpp}{W_0^{1,\ p}(\Omega)}
\newcommand{\ino}{\int_\Omega}
\newcommand{\bo}{\overline{\Omega}}
\newcommand{\ccc}{\cC_0^1(\bo)}
\newcommand{\iii}{\opint_{D_1}D_i}

\theoremstyle{plain}
\newtheorem{Thm}{Theorem}[section]
\newtheorem{Lem}[Thm]{Lemma}
\newtheorem{Def}[Thm]{Definition}
\newtheorem{Cor}[Thm]{Corollary}
\newtheorem{Prop}[Thm]{Proposition}
\newtheorem{Rem}[Thm]{Remark}
\newtheorem{Ex}[Thm]{Example}

\numberwithin{equation}{section}
\newcommand{\meas}{\rm meas}
\newcommand{\ess}{\rm ess} \newcommand{\esssup}{\rm ess\,sup}
\newcommand{\essinf}{\rm ess\,inf} \newcommand{\spann}{\rm span}
\newcommand{\clos}{\rm clos} \newcommand{\opint}{\rm int}
\newcommand{\conv}{\rm conv} \newcommand{\dist}{\rm dist}
\newcommand{\id}{\rm id} \newcommand{\gen}{\rm gen}
\newcommand{\opdiv}{\rm div}

\vskip 0.2cm \arraycolsep1.5pt
\newtheorem{Lemma}{Lemma}[section]
\newtheorem{Theorem}{Theorem}[section]
\newtheorem{Definition}{Definition}[section]
\newtheorem{Proposition}{Proposition}[section]
\newtheorem{Remark}{Remark}[section]
\newtheorem{Corollary}{Corollary}[section]

\section {Introduction}

The main objective of this paper is to consider the following fractional Hardy-Sobolev elliptic problem
\begin{equation}\label{eq:1.1}
\left\{
  \begin{array}{ll}
  \displaystyle
(-\Delta)^{\frac \alpha 2} u=\lambda
|u|^{r-2}u+\mu\frac{|u|^{q-2}u}{|x|^{s}}\quad &{\rm in }\quad \Omega,    \\
\\ u=0 &{\rm on }\quad
\partial\Omega,
\end{array}
\right.
 \end{equation}
where $\Omega$ is a smooth bounded domain in $\mathbb R^N$ with
$0\in \Omega$, $\lambda,\mu>0,\,0<s\leq\alpha$. We define fractional
Laplacian operator $(-\Delta)^{\frac \alpha 2}$ with $0<\alpha<2$ as
follows.

Let $\{(\lambda_k,\varphi_k)\}^\infty_{k=1}$ be the eigenvalues and corresponding eigenfunctions of the
Laplacian operator $-\Delta$ in $\Omega$ with zero Dirichlet boundary values on $\partial\Omega$ normalized by $\|\varphi_k\|_{L^2(\Omega)} = 1$, i.e.
\[
-\Delta \varphi_k = \lambda_k \varphi_k\quad{\rm in}\ \Omega;\quad \varphi_k = 0\quad{\rm on}\ \partial\Omega.
\]
We define the space $H^{\frac\alpha2}_0(\Omega)$ by
\[
H^{\frac\alpha2}_0(\Omega)=\Big\{u=\sum_{k=1}^\infty u_k\varphi_k\in L^2(\Omega): \sum_{k=1}^\infty \lambda_k^{\frac \alpha2}u_k^2<\infty\Big\},
\]
which is equipped with the norm
\[
\|u\|_{H^{\frac\alpha2}_0(\Omega)} =\bigg(\sum_{k=1}^\infty \lambda_k^{\frac \alpha2}u_k^2\bigg)^{\frac 12}.
\]
For any $u\in H^{\frac\alpha2}_0(\Omega)$, the fractional Laplacian $(-\Delta)^{\frac \alpha 2}$ is defined by
\[
(-\Delta)^{\frac \alpha 2}u = \sum_{k=1}^\infty \lambda_k^{\frac \alpha 2}u_k\varphi_k,
\]
which is a nonlocal operator. In \cite{CS}, by using Dirichlet-Neumann mapping, elliptic problems with fractional Laplacian in $\mathbb{R}^N$ can be converted into local elliptic problems. This argument was applied to bounded domains in \cite{BCP,BCPa,CaT,CDDS,T}. Precisely, let $\mathcal{C}_{\Omega}=\Omega\times(0,\infty)$. We define
\[
H^1_{0,L}(\mathcal{C}_\Omega)
= \{w\in L^2(\mathcal{C}_\Omega): w = 0 \,\,{\rm on}\,\, \partial_L\mathcal{C}_\Omega,\  \kappa_\alpha\int_{\mathcal{C}_\Omega}y^{1-\alpha}|\nabla w|^2\,dxdy<\infty\},
\]
which is a Hilbert space with the norm
\[
\|w\|_{H^1_{0,L}(\mathcal{C}_\Omega)}^2=\kappa_\alpha\int_{\mathcal{C}_\Omega}y^{1-\alpha}|\nabla w|^2\,dxdy.
\]
For any $u\in H^{\frac\alpha2}_0(\Omega)$, the unique solution $w\in H^1_{0,L}(\mathcal{C}_\Omega)$ of the problem
\begin{equation}\label{eq:1.2}
\left\{ \arraycolsep=1.5pt
\begin{array}{ll}
-{\rm div}(y^{1-\alpha}\nabla w) = 0, & \ \ \mathcal{C}_{\Omega},\\[1mm]
w=0,& \ \
\partial_L\mathcal{C}_{\Omega}=\partial\Omega\times(0,\infty),\\[1mm]
w = u , &\ \ \Omega\times\{0\},
 \end{array}
 \right.
 \end{equation}
is referred to be the extension $w = E_\alpha(u)$ of $u$. We know from \cite{BCP,BCPa,CDDS} that the mapping $E_\alpha: H^{\frac\alpha 2}_0(\Omega) \to H^1_{0,L}(\mathcal{C}_\Omega)$ is an isometric isomorphism, and we have
\begin{equation}\label{eq:1.2a}
\|u\|_{H^{\frac\alpha 2}_0(\Omega)} = \|E_\alpha(u)\|_{H^1_{0,L}(\mathcal{C}_\Omega)}.
 \end{equation}
It was shown in \cite{CS}, see also \cite{BCPa} etc, that $$-\kappa_\alpha\lim_{y\to 0^+}y^{1-\alpha}\frac {\partial w}{\partial y} = (-\Delta)^{\frac \alpha 2}u.$$
Hence, the restriction of solutions of problem \eqref{eq:1.2} in $\Omega$ are solutions of problem \eqref{eq:1.1}.
Using this sort of extension, one can study the existence of  elliptic problems with the fractional Laplacian by the variational method, see \cite{BCP}, \cite{BCPa}, \cite{CaT} and \cite{T} etc for related results.
Now, we may reformulate the nonlocal problem (\ref{eq:1.1}) in a local way, that is,
\begin{equation}\label{eq:1.3}
  \left\{
  \begin{array}{ll}
  \displaystyle
div(y^{1-\alpha}\nabla w)=0 \  \  &{\rm in}\quad C_\Omega,    \\[1mm]
w=0,\ \
&{\rm on}\quad\partial_L\mathcal{C}_{\Omega},\\[1mm]
\lim_{y\to 0}y^{1-\alpha}\frac{\partial w}{\partial \nu}=\lambda |w|^{r-2}w+\mu\frac{|w|^{q-2}w}{|x|^s} \ \ &{\rm in }\quad
\Omega,
\end{array}
\right.
\end{equation}
where $\frac {\partial }{\partial \nu}$ is the outward normal derivative. We then turn to study the equivalent problem (\ref{eq:1.3}). Define on $H^1_{0,L}(\mathcal{C}_\Omega)$, the functional
\begin{equation}\label{eq:1.3b}
I(w)=\frac 12\int_{\mathcal{C}_\Omega} y^{1-\alpha}|\nabla w(x,y)|^2\,dxdy-\frac
{\lambda}r\int_{\Omega} |w|^{r}\,dx-\frac \mu q\int_{\Omega}
\frac{|w|^q}{|x|^s}\,dx.
\end{equation}
Critical points of $I(w)$ are weak solutions of \eqref{eq:1.3}.

In the case $\alpha = 2$, problem \eqref{eq:1.1} with Hardy-Sobolev
term has been extensively studied, see for instance \cite{CHP, GY, KP, KP1, LP} and related
references. If $0<\alpha<2,\,s = 0$ and $q =
2_\alpha^*=\frac{2N}{N-\alpha}$, problem \eqref{eq:1.1} is
Brezis-Nirenberg problem with fractional Laplacian. In \cite{T},
existence results were found for the case $\alpha =1$, while the
general case was considered in \cite{BCP}. The main difficulty of
such a problem is the lack of compactness. As in \cite{BN}, the
compactness can be retained below levels related to the best
constant $S_{\alpha}^{E}$ of the trace inequality, that is the
Palais-Smale condition holds for values below these levels, where
\[
S_{\alpha}^E = \inf_{w\in
H^1_{0,L}(\mathcal{C}_\Omega)}\frac{\int_{\mathcal{C}_\Omega}y^{1-\alpha}|\nabla
w(x,y)|^2\,dxdy}{\Big(\int_{\Omega}|w(x,0)|^{2^*_\alpha}\,dx\Big)^{\frac
2{2^*_\alpha}}}.
\]
The constant $S_{\alpha}^{E}$ is related to the best constant
$S_\alpha$ of fractional Sobolev inequality:
\[
S_{\alpha} = \inf_{u\in H_0^{\frac \alpha 2}(\Omega)}\frac{\int_{\Omega}|(-\Delta)^{\frac \alpha 4} u(x)|^2\,dx}{\Big(\int_{\Omega}|u(x)|^{2^*_\alpha}\,dx\Big)^{\frac 2{2^*_\alpha}}}.
\]
In fact, by \cite{BCPa}, $S_{\alpha}^{E} = k_\alpha S_{\alpha}$ with
the constant $k_\alpha$ given there. In the case $\Omega =
\mathbb{R}^N$, it is known from \cite{CLO} that $S_\alpha$ is
achieved by functions with the form
\begin{equation}\label{eq:1.3a}
U(x) = U_\varepsilon(x) = \frac{\varepsilon^{\frac{N-\alpha}2}}{(\varepsilon^2 + |x|^2)^{\frac{N-\alpha}2}}
\end{equation}
for $\varepsilon>0$. Furthermore, in this case, $S_{\alpha}^{ E}$ is
also achieved, the minimizer actually is $w = E_\alpha(U)$, see \cite{BCP} for
details. However, the explicit form of $E_\alpha(U)$ has not been
worked out yet. To verify $(PS)_c$ condition, it is necessary to
investigate further properties of $E_\alpha(U)$. By $(PS)_c$ condition for $I$ we mean that any sequence $\{w_n\}\subset
H^1_{0,L}(\mathcal{C}_\Omega)$ such that $I(w_n)\to c$ and $I'(w_n)\to 0$ as $n\to\infty$ contains a convergent
subsequence.

In this paper, we consider the existence of solutions for problem \eqref{eq:1.1}, or equivalently, problem \eqref{eq:1.3} in both subcritical and critical cases.

Denote by $ L^q(\Omega, \frac{1}{|x|^s})$ the weighted $L^p$ space.
Invoking \eqref{eq:1.2a} and results in \cite{Yang}, we see that the inclusion
$$i:H^1_{0,L}(\mathcal{C}_\Omega)\to L^q(\Omega, \frac{1}{|x|^s})$$
is continuous if $2\leq q\leq 2_\alpha^*(s)=\frac{2(N-s)}{N-\alpha}$, and is compact if $2\leq q < 2_\alpha^*(s)$. The exponent $2_\alpha^*(s)$ is called the critical exponent for fractional Hardy-Sobolev inequality.

In subcritical case, that is, $2\leq q< 2_\alpha^*(s),\,\,2<r<2_\alpha^*$, we show that problem \eqref{eq:1.3} possesses infinitely many solutions by critical point theory. Let
\begin{equation}\label{eq:1.3c}
\mu_s=\inf_{w\in H^1_{0,L}(\mathcal{C}_\Omega)}
\frac{\int_{C_\Omega} y^{1-\alpha}|\nabla w|^2\,dxdy}{\int_{\Omega},
\frac{|w(x,0)|^2}{|x|^s}\,dx}.
\end{equation}
Indeed, we have the following results.
\begin{Theorem}\label{thm:1.1}
Suppose that $0\leq s<\alpha$, $2\leq q< 2_\alpha^*(s)$,
$2<r<2_\alpha^*$. If either (i) $q>2$ or (ii) $q=2,\,0<\mu<\mu_s$,
problem \eqref{eq:1.1} has infinitely many solutions.
\end{Theorem}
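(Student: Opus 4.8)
The plan is to obtain the solutions as critical points of the even $C^1$ functional $I$ of \eqref{eq:1.3b} on $H^1_{0,L}(\mathcal{C}_\Omega)$ via the symmetric mountain pass theorem of Ambrosetti and Rabinowitz; here $I(0)=0$, and since $r$ and $q$ enter only through $|w|^r$ and $|w|^q/|x|^s$, the functional is invariant under $w\mapsto -w$. The structural input is that, because $2<r<2_\alpha^*$ and $2\le q<2_\alpha^*(s)$, the embeddings $H^1_{0,L}(\mathcal{C}_\Omega)\hookrightarrow L^r(\Omega)$ and $H^1_{0,L}(\mathcal{C}_\Omega)\hookrightarrow L^q(\Omega,|x|^{-s})$ are continuous and \emph{compact}; this is what will yield the $(PS)$ condition at every level.

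First I would check $(PS)_c$ for arbitrary $c\in\mathbb{R}$. Let $\{w_n\}$ satisfy $I(w_n)\to c$, $I'(w_n)\to0$, and write $\|\cdot\|$ for the $H^1_{0,L}(\mathcal{C}_\Omega)$-norm. In case (i), putting $\theta=\min\{r,q\}>2$,
\[
I(w_n)-\tfrac1\theta\langle I'(w_n),w_n\rangle = \Big(\tfrac12-\tfrac1\theta\Big)\|w_n\|^2 + \lambda\Big(\tfrac1\theta-\tfrac1r\Big)\int_\Omega|w_n|^r\,dx + \mu\Big(\tfrac1\theta-\tfrac1q\Big)\int_\Omega\frac{|w_n|^q}{|x|^s}\,dx,
\]
and all three terms on the right are nonnegative, so $\{w_n\}$ is bounded. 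In case (ii) ($q=2$) I would instead use $\theta=r$ and the definition \eqref{eq:1.3c} of $\mu_s$ to get $I(w_n)-\frac1r\langle I'(w_n),w_n\rangle\ge(\tfrac12-\tfrac1r)\big(1-\tfrac\mu{\mu_s}\big)\|w_n\|^2$, which is coercive since $\mu<\mu_s$ and $r>2$; again $\{w_n\}$ is bounded. Extracting $w_n\rightharpoonup w$, the two compact embeddings make the lower-order terms converge strongly, so from $\langle I'(w_n)-I'(w),w_n-w\rangle\to0$ one reads off $\|w_n-w\|\to0$; hence $(PS)_c$ holds and $w$ is a critical point.

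Next I would verify the geometry. Near the origin the continuous embeddings give $I(w)\ge\frac12\|w\|^2-C_1\|w\|^r-C_2\|w\|^q$ in case (i), and $I(w)\ge\frac12\big(1-\frac\mu{\mu_s}\big)\|w\|^2-C_1\|w\|^r$ in case (ii); since every subtracted power exceeds $2$, there are $\rho,\beta>0$ with $I|_{\partial B_\rho}\ge\beta$. For the finite-dimensional condition, fix a finite-dimensional subspace $\widetilde E\subset H^1_{0,L}(\mathcal{C}_\Omega)$, on which all norms are equivalent; discarding the nonpositive $q=2$ term when necessary, $I(w)\le\frac12\|w\|^2-c\,\|w\|^{\min\{r,q\}}$ on $\widetilde E$ (with $\min\{r,q\}=r>2$ in case (ii)), so $I(w)\to-\infty$ as $\|w\|\to\infty$ in $\widetilde E$; thus $I\le0$ on $\widetilde E\setminus B_{R(\widetilde E)}$. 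As $H^1_{0,L}(\mathcal{C}_\Omega)$ is infinite-dimensional, the symmetric mountain pass theorem produces an unbounded sequence of critical values, hence infinitely many critical points $w_k$ of $I$; their traces $u_k=w_k(\cdot,0)$ are the desired solutions of \eqref{eq:1.1}.

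I expect the only genuinely delicate point to be the case $q=2$: there $I$ carries the possibly indefinite quadratic piece $\frac12\|w\|^2-\frac\mu2\int_\Omega|w|^2/|x|^s\,dx$, and the hypothesis $\mu<\mu_s$ is exactly the fractional Hardy inequality making this piece positive definite, so that both the Palais--Smale boundedness estimate and the local geometry at $0$ survive, with the superlinear term $|u|^r$, $r>2$, supplying the mountain pass structure. Everything else is routine since the problem is subcritical and the underlying embeddings are compact.
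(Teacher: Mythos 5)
Your proposal is correct and follows essentially the same route as the paper: critical points of the even functional $I$ via the symmetric mountain pass theorem (Rabinowitz's Theorem 9.12), with the $(PS)$ condition obtained from boundedness of $(PS)$ sequences plus compactness of the subcritical embeddings into $L^r(\Omega)$ and $L^q(\Omega,|x|^{-s})$, and with $\mu<\mu_s$ used exactly as in the paper to make the quadratic part coercive when $q=2$. The only differences are cosmetic (you unify the paper's case split $r\geq q$ versus $r<q$ by testing with $\theta=\min\{r,q\}$, and you conclude strong convergence from $\langle I'(w_n)-I'(w),w_n-w\rangle\to 0$ rather than from norm convergence), so no further comment is needed.
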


\bigskip

If $q=2, s=\alpha$, problem \eqref{eq:1.1} is related to the fractional Hardy inequality
\[
\int_{\Omega}\frac{|u(x)|^2}{|x|^{\alpha}}\,dx\leq
C_{\alpha,N}\int_{\Omega}|(-\Delta)^{\frac \alpha 4} u(x)|^2\,dx,
\]
which was investigated in \cite{CT}, \cite{FS} and \cite{Y}, and the best constant $C_{\alpha,N}$ was
 computed. By \eqref{eq:1.2a}, $\mu_\alpha$ in \eqref{eq:1.3c} can also be
worked out. We obtain existence results for subcritical case
$2<r<2^*_\alpha$, and nonexistence result for critical case
$r=2^*_\alpha$. Precisely, we have
\begin{Theorem}\label{thm:1.2}
Suppose that $q=2, s=\alpha$.

(i) If $2<r<2_\alpha^*$ and $0<\mu<\mu_\alpha$, then problem
\eqref{eq:1.1} has infinitely many solutions.

(ii) If $r=2_\alpha^*$ and $\Omega$ is a star-shaped domain, then \eqref{eq:1.1} does not possess nontrivial
solution.
\end{Theorem}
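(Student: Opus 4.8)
\emph{Part (i).} The plan is to apply the symmetric mountain‑pass theorem to the natural functional associated with \eqref{eq:1.1} on $H^{\frac\alpha2}_0(\Omega)$,
\[
J(u)=\tfrac12\|u\|^2_{H^{\frac\alpha2}_0(\Omega)}-\tfrac\mu2\int_\Omega\frac{u^2}{|x|^{\alpha}}\,dx-\tfrac\lambda r\int_\Omega|u|^{r}\,dx
\]
(equivalently, the functional $I$ of \eqref{eq:1.3b} restricted to $E_\alpha(H^{\frac\alpha2}_0(\Omega))$, which is a natural constraint: for $w$ in that subspace $I'(w)$ annihilates its orthogonal complement, so critical points there solve \eqref{eq:1.1}). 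The key point is that $\mu<\mu_\alpha$ makes the quadratic part coercive: by \eqref{eq:1.2a} and the definition \eqref{eq:1.3c} one has $\int_\Omega|x|^{-\alpha}u^2\,dx\le\mu_\alpha^{-1}\|u\|^2_{H^{\frac\alpha2}_0(\Omega)}$, hence
\[
\tfrac12\big(1-\mu/\mu_\alpha\big)\|u\|^2_{H^{\frac\alpha2}_0(\Omega)}\le Q(u):=\tfrac12\|u\|^2_{H^{\frac\alpha2}_0(\Omega)}-\tfrac\mu2\int_\Omega\frac{u^2}{|x|^\alpha}\,dx\le\tfrac12\|u\|^2_{H^{\frac\alpha2}_0(\Omega)} .
\]
Then $J$ is even, $J(0)=0$; from the coercivity of $Q$ and the continuity of $H^{\frac\alpha2}_0(\Omega)\hookrightarrow L^r(\Omega)$ one gets $J\ge\beta>0$ on a small sphere; and on any finite‑dimensional subspace $E$, since $r>2$ and $u\mapsto\|u\|_{L^r(\Omega)}$ is a norm on $E$, $J\to-\infty$ there.

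\emph{$(PS)_c$ for $J$, every $c\in\mathbb R$.} Boundedness of a sequence with $J(u_n)\to c$, $J'(u_n)\to0$ follows from $J(u_n)-\tfrac1rJ'(u_n)u_n=(\tfrac12-\tfrac1r)\big(\|u_n\|^2_{H^{\frac\alpha2}_0(\Omega)}-\mu\int_\Omega|x|^{-\alpha}u_n^2\big)\ge(\tfrac12-\tfrac1r)(1-\mu/\mu_\alpha)\|u_n\|^2_{H^{\frac\alpha2}_0(\Omega)}$. Extracting $u_n\rightharpoonup u$, the term $\lambda\int_\Omega|u_n|^{r-2}u_n(u_n-u)\to0$ by the compact embedding into $L^r(\Omega)$ ($r<2^*_\alpha$), while (using only that $H^{\frac\alpha2}_0(\Omega)\hookrightarrow L^2(\Omega,|x|^{-\alpha})$ is continuous, hence weak‑to‑weak) $J'(u_n)(u_n-u)=\|u_n-u\|^2_{H^{\frac\alpha2}_0(\Omega)}-\mu\int_\Omega|x|^{-\alpha}(u_n-u)^2+o(1)\ge(1-\mu/\mu_\alpha)\|u_n-u\|^2_{H^{\frac\alpha2}_0(\Omega)}+o(1)$; since the left side tends to $0$, $u_n\to u$ strongly. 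Thus the symmetric mountain‑pass theorem produces an unbounded sequence of critical values of $J$, hence infinitely many solutions of \eqref{eq:1.1}. Note the lack of compactness of the Hardy embedding is harmless because that term sits inside the coercive quadratic form.

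\emph{Part (ii).} For the nonexistence I would use a Pohozaev identity for the extended problem \eqref{eq:1.3} with $r=2^*_\alpha$, $q=2$, $s=\alpha$. Let $w$ solve \eqref{eq:1.3} nontrivially and $u=w(\cdot,0)$; by regularity for the degenerate equation $w$ is smooth on $\overline{\mathcal{C}_\Omega}$ away from $\Omega\times\{0\}$ and from $(0,0)$, with the decay of $E_\alpha(u)$ as $y\to\infty$. Multiplying $-\mathrm{div}(y^{1-\alpha}\nabla w)=0$ by the Rellich multiplier $(x,y)\cdot\nabla w$ and integrating over a truncation of $\mathcal{C}_\Omega$ (removing $\{|x|<\varepsilon\}$ and $\{y>R\}$), then letting $\varepsilon\to0$ and $R\to\infty$ — using $\int_\Omega|x|^{-\alpha}u^2<\infty$ and the decay of $w$ to discard the spurious boundary terms — and also testing \eqref{eq:1.3} with $w$ (so that $\int_{\mathcal{C}_\Omega}y^{1-\alpha}|\nabla w|^2=\lambda\int_\Omega|u|^{2^*_\alpha}+\mu\int_\Omega|x|^{-\alpha}u^2$), one integration by parts in $x$ on $\Omega\times\{0\}$ (where $u=0$ on $\partial\Omega$ removes the interior boundary term) makes all bulk terms cancel and leaves only
\[
\int_{\partial_L\mathcal{C}_\Omega}y^{1-\alpha}\,(x\cdot\nu)\,\big|\partial_\nu w\big|^2\,d\sigma\,dy=0 ;
\]
this cancellation is exactly the scale invariance of both $|u|^{2^*_\alpha-2}u$ and the $\alpha$‑Hardy term $|x|^{-\alpha}u$.

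\emph{Conclusion and main obstacle.} Since $\Omega$ is star‑shaped with respect to the origin, $x\cdot\nu\ge0$ on $\partial\Omega$ and $\int_{\partial\Omega}x\cdot\nu\,d\sigma=N|\Omega|>0$, so $\partial_\nu w=0$ on $\Gamma\times(0,\infty)$ where $\Gamma=\{x\in\partial\Omega:x\cdot\nu>0\}$ is nonempty and relatively open; together with $w=0$ on $\partial_L\mathcal{C}_\Omega$ this is zero Cauchy data for $w$ on $\Gamma\times(0,\infty)$. Extending $w$ by $0$ across this set gives a solution of the same degenerate elliptic equation (whose weight $y^{1-\alpha}$ is Muckenhoupt $A_2$) vanishing on a nonempty open set, so unique continuation forces $w\equiv0$, i.e.\ $u\equiv0$. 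The technically demanding step is making the Pohozaev identity rigorous: performing the integration by parts against the degenerate weight $y^{1-\alpha}$, controlling the boundary contributions near the singularity $(0,0)$ — where only $\int_\Omega|x|^{-\alpha}u^2<\infty$ is known and $u$ may be unbounded — and at $y=\infty$, and securing the regularity of $w$ these manipulations require; once the identity holds, the cancellation of bulk terms and the unique‑continuation step are essentially forced.
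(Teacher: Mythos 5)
Your proposal is correct and follows essentially the same route as the paper: for (i) the symmetric mountain--pass theorem with the Hardy term absorbed into an equivalent coercive quadratic form (so that only the compact $L^{r}$ embedding is needed for the $(PS)$ condition), and for (ii) a Pohozaev identity for the extended problem \eqref{eq:1.3} combined with star-shapedness and unique continuation; working on $H^{\frac\alpha2}_0(\Omega)$ instead of $H^1_{0,L}(\mathcal{C}_\Omega)$ is immaterial because of the isometry \eqref{eq:1.2a}.
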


If $r=2_\alpha^*$, where $2_\alpha^*$ is the critical fractional Sobolev exponent, this is critical fractional Sobolev problem with singular perturbation term. We may deduce in spirit of \cite{BCP,BN,T} the following results.

\begin{Theorem}\label{thm:1.4}
Suppose $0<s<\alpha,r=2_\alpha^*$ and $2\leq q<2_\alpha^*(s)$.

(i) If $q>2$, then problem \eqref{eq:1.1} possesses at least a
positive solution provided that
$N>\frac{2(\alpha-s)}{q}+\alpha$.

(ii) If $q=2$ and $0<\mu<\mu_s$, then
problem \eqref{eq:1.1} possesses at least a positive solution provided that $N> 2\alpha-s$.
\end{Theorem}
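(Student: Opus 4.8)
\textbf{Proof strategy for Theorem \ref{thm:1.4}.}
The plan is to work with the equivalent local problem \eqref{eq:1.3} and apply the mountain pass theorem to $I$ in \eqref{eq:1.3b}, following the Brezis--Nirenberg scheme \cite{BN} and its fractional versions \cite{BCP,T}. To obtain a \emph{positive} solution I would first replace $|w|^{r-2}w,\ |w|^{q-2}w$ in \eqref{eq:1.3} by $w_+^{r-1},\ w_+^{q-1}$ and work with the corresponding functional $\widetilde I$ on $H^1_{0,L}(\mathcal{C}_\Omega)$; testing a critical point $w$ of $\widetilde I$ against its negative part gives $w\ge0$, and the strong maximum principle gives $w>0$ in $\Omega$. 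Next I check the mountain pass geometry of $\widetilde I$. In case (i), since $q>2$ and $r=2_\alpha^*>2$, the fractional trace and Hardy--Sobolev inequalities give $\widetilde I(w)\ge\frac12\|w\|^2-C\|w\|^q-C\|w\|^{2_\alpha^*}\ge\rho>0$ on a small sphere; in case (ii), $q=2$ and the hypothesis $0<\mu<\mu_s$ makes the quadratic part $\|w\|^2-\mu\int_\Omega|w(x,0)|^2|x|^{-s}\,dx\ge(1-\mu/\mu_s)\|w\|^2$, which yields the same conclusion. Moreover $\widetilde I(tw_0)\to-\infty$ as $t\to\infty$ for any fixed $w_0$ with $w_0(\cdot,0)\not\equiv0$. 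Let $c$ be the resulting mountain pass level.

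The compactness threshold is
\[
c^*:=\frac{\alpha}{2N}\,\lambda^{-\frac{N-\alpha}{\alpha}}\,\big(S_\alpha^E\big)^{\frac N\alpha},
\]
the energy carried by one bubble of the limiting problem with only the critical term. Since $2\le q<2_\alpha^*(s)$, the embedding $H^1_{0,L}(\mathcal{C}_\Omega)\hookrightarrow L^q(\Omega,|x|^{-s})$ recalled in the Introduction is compact, so the Hardy--Sobolev term is a compact lower-order perturbation and the standard argument shows that $\widetilde I$ satisfies $(PS)_c$ for every $c<c^*$. Hence it remains to prove the strict inequality $c<c^*$.

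For this I would use the family $w_\varepsilon=E_\alpha(\eta U_\varepsilon)$, with $U_\varepsilon$ from \eqref{eq:1.3a} and $\eta$ a cutoff equal to $1$ near $0$ and supported in a ball $B\subset\Omega$ centred at the origin. Using the isometry \eqref{eq:1.2a}, the fact that $E_\alpha(U_\varepsilon)$ realises $S_\alpha^E$, and decay estimates for $E_\alpha(U_\varepsilon)$, one obtains as $\varepsilon\to0$, for a constant $K>0$,
\[
\|w_\varepsilon\|_{H^1_{0,L}(\mathcal{C}_\Omega)}^2=S_\alpha^E K^{\frac{N-\alpha}{N}}+O(\varepsilon^{N-\alpha}),\qquad \int_\Omega|w_\varepsilon(x,0)|^{2_\alpha^*}\,dx=K+O(\varepsilon^{N}).
\]
The hypotheses force $q(N-\alpha)+s>N$, whence a scaling computation gives $\int_\Omega|w_\varepsilon(x,0)|^q|x|^{-s}\,dx\ge c_1\varepsilon^{\beta}$ with $\beta:=N-s-\tfrac{q(N-\alpha)}{2}$, and in case (ii) likewise $\int_\Omega|w_\varepsilon(x,0)|^2|x|^{-s}\,dx\ge c_1\varepsilon^{\alpha-s}$. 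Since $t\mapsto\frac{t^2}2\|w_\varepsilon\|^2-\frac{\lambda}{2_\alpha^*}t^{2_\alpha^*}\int_\Omega|w_\varepsilon(x,0)|^{2_\alpha^*}\,dx$ is maximised at some $t_\varepsilon$ bounded away from $0$ and $\infty$, with maximum value $\frac{\alpha}{2N}\big(\|w_\varepsilon\|^2\,\lambda^{-\frac{2}{2_\alpha^*}}(\int_\Omega|w_\varepsilon(x,0)|^{2_\alpha^*}\,dx)^{-\frac{2}{2_\alpha^*}}\big)^{\frac N\alpha}=c^*+O(\varepsilon^{N-\alpha})$, subtracting the nonnegative Hardy--Sobolev contribution gives
\[
\max_{t\ge0}\widetilde I(tw_\varepsilon)\ \le\ c^*+O(\varepsilon^{N-\alpha})-c_2\mu\varepsilon^{\gamma},\qquad \gamma=\beta\ \text{in case (i)},\ \ \gamma=\alpha-s\ \text{in case (ii)}.
\]
The assumption $N>\tfrac{2(\alpha-s)}q+\alpha$ in case (i) is exactly $\beta<N-\alpha$, and $N>2\alpha-s$ in case (ii) is exactly $\alpha-s<N-\alpha$; so for $\varepsilon$ small the perturbation wins and $\max_{t\ge0}\widetilde I(tw_\varepsilon)<c^*$. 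Therefore $c<c^*$, $(PS)_c$ holds, and the mountain pass theorem produces a nontrivial critical point, i.e. a positive solution of \eqref{eq:1.1}.

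The hard part will be the asymptotic analysis of the last paragraph: $E_\alpha(U_\varepsilon)$ has no closed form, so the estimate of $\|w_\varepsilon\|^2$ — and in particular the claim that the remainder is genuinely $O(\varepsilon^{N-\alpha})$ and not larger — must be read off from sharp decay bounds for $E_\alpha(U_\varepsilon)$ and controlled through the truncation near $\partial B$; the borderline exponent cases and the precise dependence of the constants on $\lambda,\mu$ also need care. Everything else (the mountain pass geometry, the value of $c^*$, and $(PS)_c$ below $c^*$) follows the classical template once the compact embedding into $L^q(\Omega,|x|^{-s})$ is in hand.
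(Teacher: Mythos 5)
Your proposal is correct and follows essentially the same route as the paper: a mountain pass argument with $(PS)_c$ below $\frac{\alpha}{2N}(S_\alpha^E)^{N/\alpha}$ (the Hardy--Sobolev term being a compact perturbation, as in \cite{BCP}), truncated-bubble test functions with the expansions $\|\cdot\|^2=\cdots+O(\varepsilon^{N-\alpha})$, $O(\varepsilon^{N})$ for the critical term, and the lower bound $c_1\varepsilon^{\beta}$ (resp. $c_1\varepsilon^{\alpha-s}$) for the perturbation, the dimensional hypotheses being exactly $\beta<N-\alpha$ (resp. $\alpha-s<N-\alpha$), which is the paper's Lemma \ref{lem:3.1.1} and proof of Theorem \ref{thm:1.4}. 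The only cosmetic differences are that you truncate $U_\varepsilon$ before extending while the paper truncates the extension $\varphi\,E_\alpha(U_\varepsilon)$ in the cylinder, and your side remark that the hypotheses force $q(N-\alpha)+s>N$ is neither needed for the lower bound nor actually implied in case (i), but this does not affect the argument.
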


Finally, if $q= 2^*_\alpha(s) = \frac{2(N-s)}{N-\alpha}$, it is a critical Hardy-Sobolev problem. If $\alpha = 2$, such a problem has been studied by \cite{CHP, GY, KP, KP1, LP} etc. The exponent $2^*_\alpha(s)$ is the critical exponent for the fractional Hardy-Sobolev inequality
\begin{equation}\label{eq:1.4}
\big(\int_{\mathbb{R}^N}\frac{|u(x)|^{2^*_{\alpha}(s)}}{|x|^{s}}\,dx\big)^{\frac
2{2^*_{\alpha}(s)}}\leq C\int_{\mathbb{R}^N}|(-\Delta)^{\frac \alpha
4} u(x)|^2\,dx.
\end{equation}
The description of the compactness for critical Hardy-Sobolev problems is closely related to the best constant $S_{s,\alpha}$ for  the fractional Hardy-Sobolev inequality:
\begin{equation}\label{eq:1.5}
S_{s,\alpha} = \inf_{u\in \dot H^{\frac \alpha 2}(\mathbb{R}^{N}),
u\not\equiv 0}\frac{\int_{\mathbb{R}^N}|(-\Delta)^{\frac \alpha 4}
u(x)|^2\,dx}{\big(\int_{\mathbb{R}^N}\frac{|u(x)|^{2^*_{\alpha}(s)}}{|x|^{s}}\,dx\big)^{\frac
2{2^*_{\alpha}(s)}}},
\end{equation}
where the space $\dot H^{\frac \alpha 2}(\mathbb{R}^{N})$ is defined
as the completion of $C^\infty_0(\mathbb{R}^{N})$ under the norm
\[
\|u\|_{\dot H^s(\mathbb{R}^{N})}^2 =
\int_{\mathbb{R}^N}|(-\Delta)^{\frac \alpha 4} u(x)|^2\,dx.
\]
A minimizer of the minimization problem $S_{s,\alpha}$ will be used as usually, to verify $(PS)_c$ condition for critical Hardy-Sobolev problems. It was proved by \cite{Y} that $S_{s,\alpha}$ is achieved by a positive, radially symmetric and non-increasing function.
But, an explicit formula as \eqref{eq:1.3a} for the minimizer has not been found yet. Hence, we need to find
further properties of the minimizer $u$, such as decaying laws, to verify $(PS)_c$ condition. This will be done in section 3 by the Kelvin transform and elliptic regularity theory. Furthermore, estimates for $|\nabla u|$ are also needed. Since $u$ can be expressed be the Riesz potential, the decaying law of $|\nabla u|$ at infinity can be established by the decay law of $u$. However, $|\nabla u|$ is possibly singular at the origin, it is required an estimate of singular order of $|\nabla u|$ at the origin. These are shown in section 3.  Eventually, we obtain the following results for critical Hardy-Sobolev problem.

\begin{Theorem}\label{thm:1.3}

Suppose that $0<s<\alpha, \, q=2^*_{\alpha}(s)$.

(i) If $2<r<2_\alpha^*$, then problem \eqref{eq:1.1} possesses at least a positive solution provided that $N>\alpha+\frac{2\alpha}r$.

(ii) If $r=2$ and $0<\lambda<\lambda_1$, then problem \eqref{eq:1.1}
possesses at least a positive solution provided that $N\geq
2\alpha$, where $\lambda_1$ is the first eigenvalue of
$(-\Delta)^{\frac \alpha 2}$ in $\Omega$.

(iii) If $r=2_\alpha^*$ and
$\Omega$ is a star-shaped domain, then problem \eqref{eq:1.1} does not possess
nontrivial solution for any $\mu,\lambda>0$.

\end{Theorem}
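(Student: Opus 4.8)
To prove the three assertions, I would treat (i) and (ii) by the mountain pass theorem applied to the functional $I$ in \eqref{eq:1.3b} (with $|w|^{r-2}w$ and $|w|^{q-2}w$ replaced from the outset by $w_+^{r-1}$ and $w_+^{q-1}$, so that critical points are automatically nonnegative), and (iii) by a Pohozaev identity for the extended problem \eqref{eq:1.3}. Since $r<2_\alpha^*$, the term with $\lambda$ is subcritical and the embedding $H^1_{0,L}(\mathcal{C}_\Omega)\hookrightarrow L^r(\Omega)$ is compact, so the only loss of compactness comes from the critical Hardy--Sobolev term $q=2_\alpha^*(s)$, and the relevant best constant is $S_{s,\alpha}$ of \eqref{eq:1.5}. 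First I would check the mountain pass geometry: in case (i) both nonlinearities are superquadratic, so $0$ is a strict local minimum of $I$ and $I(tw_0)\to-\infty$ for any fixed $w_0\ge0$, $w_0\not\equiv0$; in case (ii) the condition $0<\lambda<\lambda_1$ makes the quadratic form $w\mapsto\int_{\mathcal{C}_\Omega}y^{1-\alpha}|\nabla w|^2\,dxdy-\lambda\int_\Omega|w(x,0)|^2\,dx$ positive definite, which again makes $0$ a strict local minimum. Then, by a concentration--compactness analysis in the spirit of \cite{BN,BCP,T}, $I$ satisfies $(PS)_c$ for every
\[
c<c^*:=\frac{\alpha-s}{2(N-s)}\,\mu^{-\frac{N-\alpha}{\alpha-s}}\,\big(k_\alpha S_{s,\alpha}\big)^{\frac{N-s}{\alpha-s}},
\]
the only obstruction being the family of rescaled extremals of $S_{s,\alpha}$ concentrating at $0\in\Omega$, each carrying energy at least $c^*$.

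The decisive step is the energy estimate for the mountain pass level. Let $U$ be the positive, radial, non-increasing extremal of $S_{s,\alpha}$ provided by \cite{Y}, set $u_\varepsilon(x)=\varepsilon^{-\frac{N-\alpha}2}U(x/\varepsilon)$, fix $\eta\in C_0^\infty(\Omega)$ with $\eta\equiv1$ near $0$, and take $w_\varepsilon=E_\alpha(\eta u_\varepsilon)$ as test function. Using the decay law $U(x)\simeq|x|^{-(N-\alpha)}$ and the pointwise bounds on $|\nabla U|$ (including its singular order at the origin) to be established in Section~3, one obtains, as $\varepsilon\to0$,
\[
\int_{\mathcal{C}_\Omega}y^{1-\alpha}|\nabla w_\varepsilon|^2\,dxdy=k_\alpha S_{s,\alpha}\Big(\int_\Omega\frac{|\eta u_\varepsilon|^q}{|x|^s}\,dx\Big)^{\frac2q}+O(\varepsilon^{N-\alpha}),
\]
together with $\int_\Omega|x|^{-s}|\eta u_\varepsilon|^q\,dx=\int_{\mathbb{R}^N}|x|^{-s}|U|^q\,dx+O(\varepsilon^{N-s})$ and, under the respective hypotheses,
\[
\int_\Omega|\eta u_\varepsilon|^r\,dx=
\begin{cases}
\big(\int_{\mathbb{R}^N}U^r\big)\,\varepsilon^{\,N-\frac{r(N-\alpha)}2}+o\big(\varepsilon^{\,N-\frac{r(N-\alpha)}2}\big), & 2<r<2_\alpha^*,\\[2mm]
\big(\int_{\mathbb{R}^N}U^2\big)\,\varepsilon^{\alpha}+o(\varepsilon^{\alpha}), & r=2,\ N>2\alpha,\\[2mm]
c_0\,\varepsilon^{\alpha}|\log\varepsilon|+o(\varepsilon^{\alpha}|\log\varepsilon|), & r=2,\ N=2\alpha,
\end{cases}
\]
with $c_0>0$. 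Maximizing $t\mapsto I(tw_\varepsilon)$ over $t\ge0$, one finds $\max_{t\ge0}I(tw_\varepsilon)\le c^*+O(\varepsilon^{N-\alpha})-c\,\int_\Omega|\eta u_\varepsilon|^r\,dx$ for some $c>0$ (the maximizer staying bounded away from $0$ and $\infty$); under the hypothesis $N>\alpha+\frac{2\alpha}r$ of (i) one has $r(N-\alpha)>N$, so the first line above applies and the gain $\varepsilon^{\,N-r(N-\alpha)/2}$ dominates the error $O(\varepsilon^{N-\alpha})$ since then $N-\frac{r(N-\alpha)}2<N-\alpha$; in case (ii) the gain $\varepsilon^\alpha$ (resp. $\varepsilon^\alpha|\log\varepsilon|$) dominates the error precisely when $\alpha<N-\alpha$ (resp. $N=2\alpha$), i.e. $N\ge2\alpha$. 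Hence $\max_{t\ge0}I(tw_\varepsilon)<c^*$ for small $\varepsilon$, so the mountain pass level lies strictly below $c^*$, $(PS)_c$ applies, and $I$ has a nontrivial critical point $w$; testing the equation against $w_-$ gives $w\ge0$, and the strong maximum principle for $-\mathrm{div}(y^{1-\alpha}\nabla\cdot)$ yields $w>0$ in $\mathcal{C}_\Omega$, so $u=w(\cdot,0)>0$ solves \eqref{eq:1.1}. This proves (i) and (ii).

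For (iii), when $r=2_\alpha^*$ and $q=2_\alpha^*(s)$ both nonlinearities are critical in the Pohozaev sense, since $\frac Nr=\frac{N-s}q=\frac{N-\alpha}2$. Given a nontrivial solution $u$, its extension $w=E_\alpha(u)$ solves \eqref{eq:1.3}; multiplying $\mathrm{div}(y^{1-\alpha}\nabla w)=0$ by the dilation field $x\cdot\nabla_x w+y\,\partial_y w$, integrating over $\Omega\times(\delta,T)$, letting $\delta\to0$ and $T\to\infty$ (using the known regularity of $y^{1-\alpha}\partial_y w$ up to $y=0$ and the decay of $w$ as $y\to\infty$), and combining with the identity obtained by testing the equation against $w$, one arrives at a Pohozaev identity in which the bulk contributions of the two nonlinear terms carry the factors $\big(\frac Nr-\frac{N-\alpha}2\big)$ and $\big(\frac{N-s}q-\frac{N-\alpha}2\big)$ and therefore vanish identically for any $\lambda,\mu>0$; what remains is that a boundary integral over $\partial\Omega\times(0,\infty)$ proportional to $\int_{\partial_L\mathcal{C}_\Omega}y^{1-\alpha}\big|\frac{\partial w}{\partial\nu}\big|^2(x\cdot\nu)\,d\sigma dy$ equals zero. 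Since $\Omega$ is star-shaped with respect to $0$, $x\cdot\nu\ge0$ on $\partial\Omega$, hence $\frac{\partial w}{\partial\nu}=0$ there, and then $w\equiv0$, i.e. $u\equiv0$, contradicting nontriviality. Thus \eqref{eq:1.1} has no nontrivial solution for any $\lambda,\mu>0$.

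I expect the main obstacle to be the energy expansion in the decisive step. Because the extremal $U$ of $S_{s,\alpha}$ is not explicit (in contrast with $U_\varepsilon$ in \eqref{eq:1.3a}), the sharp asymptotics of $\int y^{1-\alpha}|\nabla w_\varepsilon|^2$, of $\int|x|^{-s}|\eta u_\varepsilon|^q$, and of $\int|\eta u_\varepsilon|^r$ — in particular showing that the truncation error is genuinely $O(\varepsilon^{N-\alpha})$ and no larger, and pinning down the exact power of $\varepsilon$ in the last integral — rely crucially on the decay law of $U$ and on the bounds for $|\nabla U|$ near $0$ and at infinity to be proved in Section~3; matching these expansions against the thresholds $N>\alpha+\frac{2\alpha}r$ and $N\ge2\alpha$ is where the real work lies. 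A secondary technical point is the careful execution of the $(PS)_c$ analysis for the mixed (Sobolev plus Hardy--Sobolev) nonlinearity, and of the boundary/corner terms in the Pohozaev identity for the extended problem on a bounded domain.
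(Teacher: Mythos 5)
Your proposal follows essentially the same route as the paper: mountain pass geometry, the $(PS)_c$ condition below the threshold $\frac{\alpha-s}{2(N-s)}\big(S^{E}_{s,\alpha}\big)^{\frac{N-s}{\alpha-s}}$ obtained by a Brezis--Lieb/concentration argument, energy estimates along truncated rescalings of the (non-explicit) Hardy--Sobolev extremal whose decay and gradient bounds supply exactly the $O(\varepsilon^{N-\alpha})$, $O(\varepsilon^{N-s})$ errors versus the gains $\varepsilon^{N-\frac{r(N-\alpha)}2}$, $\varepsilon^{\alpha}$, $\varepsilon^{\alpha}|\log\varepsilon|$, and the Pohozaev identity for the extended problem for part (iii). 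The only cosmetic differences (cutting off $u_\varepsilon$ in $\mathbb{R}^N$ before extending rather than truncating the extension in the cylinder, and the explicit $w_+$/maximum-principle step for positivity) do not change the argument.
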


This paper is organized as follows. In section 2, we study the
multiple solution results for subcritical problems and nonexistence
results for critical problems. In section 3, we study the
existence result for problem \eqref{eq:1} with $r=2_\alpha^*$ and
$q=2_\alpha^*(s)$ respectively.

\bigskip

\section {Subcritical problem and nonexistence}

\bigskip

In this section, we first show that in the subcritical case, problem
\eqref{eq:1.3} has infinitely many solutions, then we prove some
nonexistence results. For this purpose, we will find critical points
of the functional $I$ defined by \eqref{eq:1.3b} by the following
minimax theorem, which is Theorem 9.12 in \cite{Ra}.

\begin{Lemma}\label{lem:2.1}
Let $E$ be an infinite dimensional Banach space and let $I\in
C^2(E,\mathbb R)$ be even, satisfying $(PS)$ condition, and
$I(0)=0$. If $E=V\oplus
X$, where $V$ is finite dimensional, and $I$ satisfies\\
(i) there exist constants $\rho,\alpha>0$, such that $I_{\partial
B_\rho\cap X}\geq \alpha$, and\\
(ii) for each finite dimensional subspace $\tilde E\subset E$, there
is an $R=R(\tilde E)$ such that $I\leq 0$ on $\tilde E\setminus
B_{R(\tilde E)}$,\\
then $I$ possesses an unbounded sequence of critical values.
\end{Lemma}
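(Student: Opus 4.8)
The plan is to prove this by $\mathbb{Z}_2$‑equivariant minimax theory, combining the Krasnoselskii genus $\gamma$ with the symmetric deformation lemma, exactly as in the proof of the symmetric mountain pass theorem. Write $k=\dim V$ and fix an increasing chain of finite dimensional subspaces $V\subset E_k\subset E_{k+1}\subset\cdots$ with $\dim E_m=m$ and $\overline{\bigcup_m E_m}=E$. For $m\ge k$ let $R_m=R(E_m)$ be the radius furnished by hypothesis (ii), enlarged if necessary so that $R_m>\rho$, and set $D_m=\overline{B_{R_m}}\cap E_m$. For $m\ge k+1$ I would introduce the minimax class
\[
\Gamma_m=\big\{\,h(D_j\setminus Y):\ j\ge m,\ h\in C(D_j,E)\ \text{odd with}\ h|_{\partial D_j}=\mathrm{id},\ Y\subset E\ \text{closed symmetric},\ 0\notin Y,\ \gamma(Y)\le j-m\,\big\}
\]
and the values $c_m=\inf_{A\in\Gamma_m}\sup_{A}I$. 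Since $D_m=\mathrm{id}(D_m\setminus\emptyset)\in\Gamma_m$ the class is nonempty, and $c_m\le\sup_{D_m}I<\infty$ because $I$ is continuous on the compact set $D_m\subset E_m$; moreover $\Gamma_{m+1}\subset\Gamma_m$, so $\{c_m\}$ is nondecreasing.

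The first substantial step is an intersection lemma: every $A=h(D_j\setminus Y)\in\Gamma_m$ with $m\ge k+1$ meets $\partial B_\rho\cap X$. Let $P\colon E\to V$ be the projection along $X$ and put $U=\{u\in\mathrm{int}\,D_j:\|h(u)\|<\rho\}$, an open bounded symmetric neighbourhood of $0$ in $E_j$ whose boundary $\partial U$ (in $E_j$) lies in $\mathrm{int}\,D_j$ and on which $\|h(u)\|=\rho$. Since $\gamma(\partial U)=j$, the standard genus estimate for the zero set of the odd map $Ph\colon\partial U\to V\cong\mathbb R^k$ gives $\gamma(Z)\ge j-k$ for $Z=\{u\in\partial U:Ph(u)=0\}$. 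If $Z\subset Y$ then $\gamma(Z)\le\gamma(Y)\le j-m<j-k$, a contradiction; hence there is $u\in Z\setminus Y$, and then $h(u)\in h(D_j\setminus Y)=A$ with $h(u)\in X$ and $\|h(u)\|=\rho$. Together with hypothesis (i) this yields $c_m\ge\alpha$ for all $m\ge k+1$.

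Next I would invoke the symmetric deformation lemma, valid since $I$ is even, $C^1$ and satisfies $(PS)$; note that for $c>0$ the critical set $K_c=\{u:I(u)=c,\ I'(u)=0\}$ is compact and avoids $0$ (because $I(0)=0$), hence has finite genus. The customary argument then shows that if $c_m=c_{m+1}=\cdots=c_{m+p}=:c$ and $\gamma(K_c)\le p$, one picks a symmetric open neighbourhood $N$ of $K_c$ with $\gamma(\overline N)\le p$ and an odd homeomorphism $\eta$ of $E$, equal to the identity on $\{I\le c-\epsilon\}$, with $\eta(\{I\le c+\epsilon\}\setminus N)\subset\{I\le c-\epsilon\}$; choosing $A=h(D_j\setminus Y)\in\Gamma_{m+p}$ with $\sup_AI<c+\epsilon$ one checks $\eta(A\setminus N)\in\Gamma_m$ — this is where the bookkeeping matters: $\eta\circ h$ is odd and equals $\mathrm{id}$ on $\partial D_j$ (because $I\le 0<c-\epsilon$ there by (ii)), and the excised set $Y\cup h^{-1}(\overline N)$ has genus $\le(j-m-p)+p=j-m$ by subadditivity and monotonicity of $\gamma$ under odd maps — so $c_m\le\sup_{\eta(A\setminus N)}I\le c-\epsilon<c_m$, a contradiction. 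Hence $\gamma(K_c)\ge p+1$; taking $p=0$ shows each $c_m$ ($m\ge k+1$) is a critical value of $I$.

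It remains to see that $\{c_m\}_{m\ge k+1}$ is unbounded. If not, $c_m\uparrow\bar c<\infty$; picking $u_m\in K_{c_m}$ and using $(PS)$, $\bar c$ is a critical value and $g:=\gamma(K_{\bar c})<\infty$. If $c_{m^\ast}=\bar c$ for some $m^\ast$, then $c_m=\bar c$ for all $m\ge m^\ast$ and the multiplicity estimate forces $\gamma(K_{\bar c})\ge p+1$ for every $p$, which is impossible. If instead $c_m<\bar c$ for all $m$, apply the deformation lemma at level $\bar c$ with $\gamma(\overline N)\le g$, choose $m_0$ with $c_m\in(\bar c-\epsilon,\bar c)$ for all $m\ge m_0$ and $A\in\Gamma_{m_0+g}$ with $\sup_AI<\bar c$; then $\eta(A\setminus N)\in\Gamma_{m_0}$ and $\eta(A\setminus N)\subset\{I\le\bar c-\epsilon\}$, whence $c_{m_0}\le\bar c-\epsilon$, contradicting $c_{m_0}>\bar c-\epsilon$. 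Therefore $c_m\to\infty$, producing the claimed unbounded sequence of critical values. I expect the delicate points to be the intersection lemma (the genus estimate for the zero set of $Ph$ on $\partial U$, for which one needs $\partial U\subset\mathrm{int}\,D_j$) and, in the deformation step, the verification that $\eta(A\setminus N)$ again belongs to the minimax class $\Gamma_m$; the remaining verifications are routine.
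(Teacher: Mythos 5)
Your argument is correct: it is precisely the standard genus/symmetric-deformation proof of the symmetric mountain pass theorem (Theorem 9.12 in Rabinowitz's book), which is exactly the result the paper invokes here by citation without reproducing a proof. The delicate points you flag (the intersection lemma via the zero set of $Ph$ on $\partial U$, and the verification that the deformed, excised set stays in the minimax class) are handled as in that reference, so nothing is missing.
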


Now we prove Theorem \ref{thm:1.1} and $(i)$ of  Theorem
\ref{thm:1.2}. In the case $q=2$ in Theorem \ref{thm:1.1} and
Theorem \ref{thm:1.2}, since $0<\mu<\mu_s$, we remark that the norm
$$
\|w\|= \Big(\int_{\mathcal{C}_\Omega}y^{1-\alpha}|\nabla w|^2\,dxdy-\mu\int_{\Omega}\frac{w(x,0)^2}{|x|^s}\,dx\Big)^{\frac
12}
$$
is equivalent to the norm $\|w\|_{H^1_{0,L}(\mathcal{C}_\Omega)}$ on $H^1_{0,L}(\mathcal{C}_\Omega)$.
\bigskip

\begin{proof}[{\bf Proof of Theorem \ref{thm:1.1} and $(i)$ of  Theorem \ref{thm:1.2}:}] \ \ We need to verify that
the functional $I$ satisfies the conditions in Lemma \ref{lem:2.1},
the conclusions then follow. We prove Theorem \ref{thm:1.1} first,
then we sketch the proof of (i) in Theorem \ref{thm:1.2}.

First, we show that $(PS)$ condition holds for $I$. Let $\{w_n\}\subset H^1_{0,L}(\mathcal{C}_\Omega)$ be a $(PS)$ sequence, that is,
\[
|I(w_n)|\leq C\quad{\rm and}\quad I'(w_n)\to 0
\]
as $n\to\infty$. For the case $q>2$, this implies that if $r\geq q$,
we have
$$
(\frac 12-\frac 1q)\|w_n\|_{H^1_{0,L}(\mathcal{C}_\Omega)}^2+(\frac
1q-\frac 1r) \lambda\int_{\Omega} |w_n(x,0)|^r\,dx\leq
C+o(1)\|w_n\|_{H^1_{0,L}(\mathcal{C}_\Omega)}.
$$
While if $r<q$, one has
$$
(\frac 12-\frac 1r)\|w_n\|_{H^1_{0,L}(\mathcal{C}_\Omega)}^2+(\frac 1r-\frac 1q) \mu\int_{\Omega}
\frac{|w_n(x,0)|^q}{|x|^s}\,dx\leq C+o(1)\|w_n\|_{H^1_{0,L}(\mathcal{C}_\Omega)}.
$$
These inequalities imply that
$\{\|w_n\|_{H^1_{0,L}(\mathcal{C}_\Omega)}\}$ is bounded. Similarly,
if $q=2$, we have
$$
(\frac 12-\frac 1r)\|w_n\|_{H^1_{0,L}(\mathcal{C}_\Omega)}\leq C+o(1)\|w_n\|_{H^1_{0,L}(\mathcal{C}_\Omega)},
$$
which implies $\|w_n\|_{H^1_{0,L}(\mathcal{C}_\Omega)}$ is bounded.

Now, we show that $\{w_n\}$ has a convergent subsequence. Since
$\{w_n\}$ is bounded, we can suppose that, up to a subsequence,
$w_n\rightharpoonup w$ in $H^1_{0,L}(\mathcal{C}_\Omega)$ as $n\to\infty$. By the Sobolev compact imbedding theorem, we have
$$
\int_{\Omega} |w_n(x,0)|^r\,dx\to \int_{\Omega} |w(x,0)|^r\,dx
$$
and
$$
\int_{\Omega} \frac{|w_n(x,0)|^q}{|x|^s}\,dx\to \int_{\Omega}
\frac{|w(x, 0)|^q}{|x|^s}\,dx
$$
as $n\to\infty$. Therefore,
$$
\|w_n\|_{H^1_{0,L}(\mathcal{C}_\Omega)}^2\to \lambda\int_{\Omega} |w(x,0)|^r\,dx+\mu\int_{\Omega}
\frac{|w(x,0)|^q}{|x|^s}\,dx=\|w\|_{H^1_{0,L}(\mathcal{C}_\Omega)}^2
$$
as $n\to\infty$. The convergence $w_n\to w$ strongly in $H^1_{0,L}(\mathcal{C}_\Omega)$ follows from the fact that $w_n\rightharpoonup w$ and $\|w_n\|_{H^1_{0,L}(\mathcal{C}_\Omega)}\to \|w\|_{H^1_{0,L}(\mathcal{C}_\Omega)}$.

Next, let $V=\emptyset$ and $E=X=H^1_{0,L}(\mathcal{C}_\Omega)$ in Lemma \ref{lem:2.1}. We claim
that there exist constants $\rho,\alpha>0$, such that $I_{\partial
B_\rho\cap X}\geq \alpha$. Indeed, if $q>2$, by the Sobolev embedding theorem,
$$
I(w)\geq \frac 12\|w\|_{H^1_{0,L}(\mathcal{C}_\Omega)}^2-C\|w\|_{H^1_{0,L}(\mathcal{C}_\Omega)}^r
-C\|w\|_{H^1_{0,L}(\mathcal{C}_\Omega)}^q,
$$
and if $q=2$, we have
$$
I(w)\geq C_1\|w\|_{H^1_{0,L}(\mathcal{C}_\Omega)}^2-C_2\|w\|_{H^1_{0,L}(\mathcal{C}_\Omega)}^r.
$$
Therefore, we can choose $\alpha,\rho>0$ small enough, such that
$$
I_{\partial B_\rho\cap X}\geq \alpha>0.
$$

Finally, let $\tilde E\subset H^1_{0,L}(\mathcal{C}_\Omega)$ be a finite dimensional subspace. We claim that there
exists $R=R(\tilde E)$ such that $I\leq 0$ on $\tilde E\setminus
B_{R(\tilde E)}$.

Since $\tilde E$ is finite dimensional, any norms on $\tilde E$
are equivalent. So we have
$$
I(u)\leq C_1\|u\|_{H^1_{0,L}(\mathcal{C}_\Omega)}^2-C_2\|u\|_{H^1_{0,L}(\mathcal{C}_\Omega)}^r-C_3\|u\|_{H^1_{0,L}(\mathcal{C}_\Omega)}^q
$$
for any $u\in \tilde E$. Since $2<r<\frac{2N}{N-\alpha}$
and $2\leq q<\frac{2(N-s)}{N-\alpha}$, the result follows easily.
Hence, the proof of Theorem \ref{thm:1.1} is completed by Lemma
\ref{lem:2.1}.

The proof of (i) in Theorem \ref{thm:1.2} is similar to the above,
we sketch it. In this case, we note that
$$
\|w\|= \Big(\int_{\mathcal{C}_\Omega}y^{1-\alpha}|\nabla
w|^2\,dxdy-\mu\int_{\Omega}\frac{w(x,0)^2}{|x|^\alpha}\,dx\Big)^{\frac
12}
$$
defines an equivalent norm as $\|w\|_{H_{0,L}^1(C_\Omega)}$. Let
$\{w_n\}\subset H^1_{0,L}(\mathcal{C}_\Omega)$ be a $(PS)$ sequence.
We can deduce as above that $\{w_n\}$ is bounded. So we have
$w_n\rightharpoonup w$ and
$$
\int_{\Omega} |w_n(x,0)|^r\,dx\to \int_{\Omega} |w(x,0)|^r\,dx,
$$
and conclude that $ \|w_n\|\to \|w\|$ as $n\to \infty$.Thus, the $(PS)$
condition holds. The proof that $I$ satisfying (i) and (ii) in Lemma
\ref{lem:2.1} is the same as above, we omit it. So we have proved (i) of
Theorem \ref{thm:1.2}.

\end{proof}

Now, we prove nonexistence of solutions for critical case. It is deduced by Pohozaev identity for problem \eqref{eq:1.3}.

\begin{Lemma}\label{lem:2.2}
Let $w\in H^1_{0,L}(\mathcal{C}_\Omega)$ be a solution of problem \eqref{eq:1.3}. There holds
\begin{eqnarray*}
&&\frac{N-\alpha}2\int_{\mathcal{C}_\Omega} y^{1-\alpha}|\nabla w|^2\,dxdy-\frac {N\lambda}r\int_{\Omega} |w|^r\,dx\\
&-& \frac{\mu(N-s)}{q}\int_{\Omega} \frac{|w|^q}{|x|^{s}}\,dx +
\frac 12\int_{\partial \Omega\times {\mathbb R^+}}\langle
y^{1-\alpha}(x,y),\nu\rangle|\nabla w|^2\,dS=0,
\end{eqnarray*}
where $\nu$ is the unit outward normal.
\end{Lemma}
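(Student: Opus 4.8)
\emph{Proof proposal.} This is the Pohozaev identity for the degenerate extension problem \eqref{eq:1.3}, and the natural route is to test the equation against the generator of dilations of $\R^{N+1}_+$. Writing $z=(x,y)\in\mathcal{C}_\Omega$, I would multiply $\mathrm{div}(y^{1-\alpha}\nabla w)=0$ by $z\cdot\nabla w=x\cdot\nabla_x w+y\,\partial_y w$ and integrate over $\mathcal{C}_\Omega$. One integration by parts gives
\[
0=\int_{\partial\mathcal{C}_\Omega}y^{1-\alpha}(\partial_\nu w)(z\cdot\nabla w)\,dS-\int_{\mathcal{C}_\Omega}y^{1-\alpha}\nabla w\cdot\nabla(z\cdot\nabla w)\,dz.
\]
Using $\nabla w\cdot\nabla(z\cdot\nabla w)=|\nabla w|^2+\tfrac12\,z\cdot\nabla(|\nabla w|^2)$ and $\mathrm{div}(y^{1-\alpha}z)=(N+2-\alpha)y^{1-\alpha}$, a second integration by parts turns the bulk term into $-\tfrac{N-\alpha}{2}\int_{\mathcal{C}_\Omega}y^{1-\alpha}|\nabla w|^2\,dz+\tfrac12\int_{\partial\mathcal{C}_\Omega}y^{1-\alpha}(z\cdot\nu)|\nabla w|^2\,dS$.

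Next I would split $\partial\mathcal{C}_\Omega$ into the lateral face $\partial\Omega\times(0,\infty)$, the base $\Omega\times\{0\}$, and the part at infinity. On the lateral face $w\equiv0$, so $\nabla w=(\partial_\nu w)\nu$ with $\nu=(n,0)$, $n$ the outer normal of $\partial\Omega$; hence $(\partial_\nu w)(z\cdot\nabla w)=(z\cdot\nu)|\nabla w|^2$ there, and the two lateral contributions combine to $+\tfrac12\int_{\partial\Omega\times\R^+}\langle y^{1-\alpha}(x,y),\nu\rangle|\nabla w|^2\,dS$. On the base the outer normal is $(0,-1)$, so $z\cdot\nu=-y\to0$ and the quadratic term there vanishes, while $-\lim_{y\to0}y^{1-\alpha}\partial_y w=\lambda|u|^{r-2}u+\mu|u|^{q-2}u/|x|^s$ and $z\cdot\nabla w\to x\cdot\nabla u$ with $u=w(\cdot,0)$; thus the base yields $\int_\Omega\big(\lambda|u|^{r-2}u+\mu|u|^{q-2}u/|x|^s\big)(x\cdot\nabla u)\,dx$. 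Writing each power times $x\cdot\nabla u$ as a divergence and integrating by parts in $x$ (using $u=0$ on $\partial\Omega$, $\mathrm{div}\,x=N$, $\mathrm{div}(x|x|^{-s})=(N-s)|x|^{-s}$, and excising $B_\varepsilon(0)$, whose boundary term is $O(\varepsilon^{N-s})\to0$) converts this into $-\tfrac{N\lambda}{r}\int_\Omega|u|^r\,dx-\tfrac{\mu(N-s)}{q}\int_\Omega|u|^q/|x|^s\,dx$. Collecting the three pieces gives exactly the asserted identity.

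The only delicate point is justifying these manipulations. Since $\mathcal{C}_\Omega$ is unbounded in $y$ and $z\cdot\nabla w$ grows linearly in $|z|$, I would first carry out the computation with a cut-off $\eta_R$ ($\equiv1$ on $|z|\le R$, supported in $|z|\le2R$) and let $R\to\infty$: the extra terms are bounded by $\int_{R\le|z|\le2R}y^{1-\alpha}\big(|\nabla w|^2+|z|^{-1}|w|\,|\nabla w|\big)\,dz$, which vanishes because $w\in H^1_{0,L}(\mathcal{C}_\Omega)$ and, $\Omega$ being bounded, $w$ and $\nabla w$ decay exponentially in $y$ by separation of variables in the eigenbasis of $-\Delta$ on $\Omega$. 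Near the base one needs enough regularity to take the trace of the flux and to see that $y^{2-\alpha}|\nabla w|^2\to0$: after bootstrapping to $u\in L^\infty_{\mathrm{loc}}$ (valid in the ranges considered here, and up to $\partial\Omega$ for the star-shaped nonexistence statements), De Giorgi--Nash--Moser theory for the $A_2$-weighted operator $\mathrm{div}(y^{1-\alpha}\nabla\,\cdot\,)$ gives local H\"older continuity of $w$ and of $y^{1-\alpha}\partial_y w$ up to $\{y=0\}$, which legitimizes the limit $y\to0$ and the use of the boundary condition in \eqref{eq:1.3}; the origin singularity of the Hardy term is harmless since $s<N$. I expect this cut-off/regularity bookkeeping to be the main obstacle, the algebraic part being routine.
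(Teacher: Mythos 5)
Your proposal is correct and follows essentially the same route as the paper: multiply by the dilation field $\langle(x,y),\nabla w\rangle$, integrate by parts, use $w\equiv 0$ on the lateral face to rewrite the flux there as $\langle (x,y),\nu\rangle|\nabla w|^2$, and convert the base term into the $-\frac{N\lambda}{r}$ and $-\frac{\mu(N-s)}{q}$ volume integrals. The only difference is the truncation device: the paper works on $\Omega\times[0,R]$ and picks a sequence $R_n\to\infty$ along which the top-face terms vanish (using only $w\in H^1_{0,L}(\mathcal{C}_\Omega)$), whereas you use a radial cutoff justified by exponential decay in $y$; both are legitimate, and your extra regularity remarks address points the paper simply labels as standard.
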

\begin{proof} The proof is standard, we sketch it. Multiplying equation \eqref{eq:1.3} by $\langle \nabla
w(x,y),(x,y)\rangle$ and integrating by part over $C_R=\Omega\times [0,R]$,
we get
\begin{eqnarray}\label{eq:3.1}
&-&\frac{N-\alpha}2\int_{C_R}y^{1-\alpha}|\nabla w|^2\,dxdy\\
\nonumber &+&\frac 12\int_{\partial \Omega\times [0,R]}\langle
y^{1-\alpha}(x,y),\nu\rangle |\nabla w|^2\,dS  \\ \nonumber &+&\frac
12 \int_{\Omega\times \{R\}}\langle y^{1-\alpha}(x,y),\nu\rangle
|\nabla w|^2\,dS,\\ \nonumber &=&-\frac{N\lambda}{r}\int_{\Omega}
|w|^r\,dx+\frac \lambda r\int_{\partial \Omega}\langle x,\nu\rangle
|w|^r\,dS -\frac{\mu(N-s)}{q}\int_{\Omega}\frac {|w|^q}{|x|^s}\,dx\\
\nonumber &+&\frac \mu q\int_{\partial \Omega}\langle x,\nu\rangle
\frac{|w|^q}{|x|^s}\,dS +\int_{\partial \Omega\times
[0,R]}y^{1-\alpha}\langle \nabla w,\nu\rangle\langle \nabla
w,(x,y)\rangle\,dS\\ \nonumber &+&\int_{\Omega\times
\{R\}}y^{1-\alpha}\langle \nabla w,\nu\rangle\langle \nabla
w,(x,y)\rangle\,dS.
\end{eqnarray}
Since $w\in H^1_{0,L}(\mathcal{C}_\Omega)$, we may find a sequence
$R_n\to \infty$ such that
$$
\int_{\Omega\times \{R_n\}}\langle y^{1-\alpha}(x,y),\nu\rangle
|\nabla w|^2\,dS\to 0
$$
and
$$
\int_{\Omega\times \{R_n\}}y^{1-\alpha}\langle \nabla
w,\nu\rangle\langle \nabla w,(x,y)\rangle\,dS\to 0
$$
as $n\to \infty$. Moreover, the fact $w\equiv 0$ on $\partial
\Omega\times [0,R]$ enable us to deduce
$$
\langle \nabla w(x,y),\nu\rangle\langle \nabla
w,(x,y)\rangle=\langle (x,y),\nu\rangle|\nabla w|^2.
$$
The conclusion follows by setting $R=R_n$ in \eqref{eq:3.1} and let
$n\to \infty$.

\end{proof}

\begin{proof}[{\bf Proof of (ii) of Theorem \ref{thm:1.2} and $(iii)$ of  Theorem \ref{thm:1.3}:}] The proof is a direct
consequence of Lemma \ref{lem:2.2}. We show (ii) of Theorem \ref{thm:1.2} as follows, the other case can be done similarly. In fact, a solution $w$ of \eqref{eq:1.3} satisfies
\begin{equation}\label{eq:3.2}
\int_{\mathcal{C}_\Omega} y^{1-\alpha}|\nabla w|^2\,dxdy=\lambda
\int_{\Omega} |w|^r\,dx+\mu\int_{\Omega} \frac{|w|^q}{|x|^s}\,dx.
\end{equation}
Under the assumptions (ii) of Theorem \ref{thm:1.2}, i.e.,
$r=\frac{2N}{N-\alpha},q=2$ and $s=\alpha$, Lemma \ref{lem:2.2} and
\eqref{eq:3.2} yield
\[
\begin{split}
\frac 12\int_{\partial \Omega\times {\mathbb R^+}}\langle
y^{1-\alpha}(x,y),\nu\rangle|\nabla w|^2\,dS=0
\end{split}
\]
Since $\Omega$ is star-shaped, we conclude that $\nabla w=0$ on
$\partial \Omega \times [0,\infty)$. The unique
continuum theorem implies $w\equiv 0$.
\end{proof}

\bigskip

\bigskip

\section {Critical case}

\bigskip

In this section, we deal with the existence results for critical
problems. For problem \eqref{eq:1.1}, or equivalently, problem
\eqref{eq:1.3}, we consider both the case that nonsingular
nonlinear term $|u|^{r-2}u$ with critical Sobolev exponent $r =
2^*_\alpha$ and the case that singular nonlinear term $\frac{|u|^{q-2}u}{|x|^{s}}$
with Hardy-Sobolev exponent $q = 2^*_\alpha(s)$.

\subsection {Nonsingular term with Sobolev critical exponent}

\ \ We suppose in this subsection that $0< s <\alpha, \, 2\leq q < 2^*_\alpha(s)$. In this case, we may proceed as \cite{BCP}. We will prove
Theorem \ref{thm:1.4} by the mountain pass theorem \cite{Ra}. Let
$C^* = \frac \alpha{2N}( S_\alpha^E)^{\frac N\alpha}$. It can be
shown as Lemma 5 of \cite{BCP} that that the functional $I$
satisfies $(PS)_c$ condition for $c\in(0, C^*)$. We need to verify
that the mountain pass level is below $C^*$.

Denote by $w_\varepsilon = E_\alpha(U_\varepsilon)$ the extension of
$U_\varepsilon$ given in \eqref{eq:1.3a}. We recall that for any
$u\in \dot H^{\frac \alpha 2}(\mathbb{R}^N)$, the extension $w$ of
$u$ has an explicit expression by the Poisson kernel:
\begin{equation}\label{eq:3.1.1}
w(x,y) = P^\alpha_y*u(x) = Cy^\alpha\int_{\mathbb R_+^N} \frac { u(z)}{(|x-z|^2+y^2)^{\frac{N+\alpha}2}}\,dz.
\end{equation}
Let $\varphi_0(t)\in C^\infty(\mathbb{R}_+)$ be a non-increasing
function satisfying that $\varphi_0(t) = 1$ if $0\leq t\leq \frac
12$, and $\varphi_0(t) = 0$ if $t\geq 1$. Choose $\rho>0$ such that
$\overline{ B_\rho^+(0)}\subset\overline{\mathcal{C}_\Omega}$. Denote
\begin{equation}\label{eq:3.1.1a}
\varphi(x,y) = \varphi_\rho(x,y) = \varphi_0(\frac{\rho_{xy}}\rho),
\end{equation}
where $\rho_{xy} = |(x,y)|$. Then, $\varphi w_\varepsilon\in
H^1_{0,L}(\mathcal{C}_\Omega)$.

\begin{Lemma}\label{lem:3.1.1} For $\varepsilon$ small and $N>2\alpha -s$, there hold

(i) $\int_{\mathcal{C}_\Omega} y^{1-\alpha}|\nabla (\varphi
w_\varepsilon)|^2\,dxdy=\|w_\varepsilon\|_{H_{0,L}^1(C_\Omega)}^2+O(\varepsilon
^{N-\alpha})$.

(ii) $\int_{\Omega} |\varphi
u_\varepsilon|^{2_\alpha^*}\,dx=\|u_\varepsilon\|_{L^{2_\alpha^*}}^{2_\alpha^*}+O(\varepsilon^N)$.

(iii) $\int_{\Omega} \frac{|\varphi u_\varepsilon|^{
q}}{|x|^s}\,dx\geq C\varepsilon^{\frac{\alpha-N}{2}q+N-s}$.
\begin{proof}
(i) was proved in \cite{BCP}. We only prove (ii) and (iii). As for (ii), we have
\[
\begin{split}
&\int_{\mathbb R^N}|u_\varepsilon^{2_\alpha^*}-(\varphi u_\varepsilon
)^{2_\alpha^*}|\,dx\\
&\leq C\int_{\mathbb R^N\setminus B_{\frac
\rho2}(0)}\frac{\varepsilon^N}{[\varepsilon^2+|x|^2]^N}\,dx\\&=\int_{\mathbb
R^N\setminus B_{\frac \rho{2\varepsilon}}(0)} \frac
1{[1+|x|^2]^N}\,dx\\&=C\int_{\frac
\rho{2\varepsilon}}^{\infty}s^{-N-1}\,ds\\&=O(\varepsilon^N),
\end{split}
\]
this proves (ii).

Similarly, for (iii), since $N>2\alpha -s$, we have
\[
\begin{split}
&\int_{\mathbb R^N}\frac{(\varphi u_\varepsilon)^{q}}{|x|^s}\,dx\\
&\geq
\int_{B_{\frac \rho2}(0)}\frac{(\varphi
u_\varepsilon)^{q}}{|x|^s}\,dx\\&=C\int_{B_{\frac
\rho2}(0)}\frac{\varepsilon^{\frac{(N-\alpha)q}{2}}}{|x|^s[\varepsilon^2+|x|^2]^{\frac{(N-\alpha)q}{2}}}\,dx\\&
=\varepsilon^{N-s-\frac{(N-\alpha)q}2}\int_{B_{\frac
\rho{2\varepsilon}}(0)}\frac
1{|x|^s[1+|x|^2]^{\frac{(N-\alpha)q}2}}\,dx\\&\geq
C\varepsilon^{N-s-\frac{(N-\alpha)q}2}.\\
\end{split}
\]
\end{proof}

\end{Lemma}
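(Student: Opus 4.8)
The three estimates are all of the ``truncate the extremal and control the tail'' type, so the plan is to compare each integral with the corresponding integral over all of $\mathbb{R}^N$ (respectively over all of $\mathbb{R}^{N+1}_{+}$), the difference being handled by the rescaling $x=\varepsilon y$ together with the exponent identity $\tfrac{N-\alpha}{2}\,2_\alpha^*=N$. For (i) --- which is the estimate already proved in \cite{BCP} --- I would write $\nabla(\varphi w_\varepsilon)=\varphi\nabla w_\varepsilon+w_\varepsilon\nabla\varphi$, expand the square and integrate against $y^{1-\alpha}$, obtaining the main term $\int_{\mathcal{C}_\Omega} y^{1-\alpha}\varphi^2|\nabla w_\varepsilon|^2$ together with a cross term and a $|\nabla\varphi|^2$ term. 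Since $\varphi\equiv1$ on $B_{\rho/2}^+(0)$ while $\nabla\varphi$ lives on the fixed annular region $\rho/2\le\rho_{xy}\le\rho$, which stays away from the origin, the difference between the main term and $\|w_\varepsilon\|_{H_{0,L}^1(\mathcal{C}_\Omega)}^2$, as well as the two remaining terms, only involve $w_\varepsilon$ and $\nabla w_\varepsilon$ there; on that region the Poisson representation \eqref{eq:3.1.1} gives $w_\varepsilon=O(\varepsilon^{(N-\alpha)/2})$ and $|\nabla w_\varepsilon|=O(\varepsilon^{(N-\alpha)/2})$, so each error term is $O(\varepsilon^{N-\alpha})$.

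For (ii) the point is that $\tfrac{N-\alpha}{2}\,2_\alpha^*=N$, hence $u_\varepsilon^{2_\alpha^*}(x)=\varepsilon^{N}(\varepsilon^2+|x|^2)^{-N}$. Since $0\le\varphi\le1$ and $\varphi\equiv1$ on $B_{\rho/2}(0)$, the quantity $|u_\varepsilon^{2_\alpha^*}-(\varphi u_\varepsilon)^{2_\alpha^*}|$ vanishes on $B_{\rho/2}(0)$ and is bounded by $u_\varepsilon^{2_\alpha^*}$ elsewhere; the substitution $x=\varepsilon y$ converts $\int_{\mathbb{R}^N\setminus B_{\rho/2}(0)}\varepsilon^N(\varepsilon^2+|x|^2)^{-N}\,dx$ into $\int_{\mathbb{R}^N\setminus B_{\rho/(2\varepsilon)}(0)}(1+|y|^2)^{-N}\,dy=O(\varepsilon^N)$, the integrand decaying like $|y|^{-2N}$ at infinity. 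As $\varphi$ is supported, on the base, inside $\Omega$, one has $\int_\Omega(\varphi u_\varepsilon)^{2_\alpha^*}\,dx=\int_{\mathbb{R}^N}(\varphi u_\varepsilon)^{2_\alpha^*}\,dx$, and (ii) follows.

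For (iii) I would simply discard everything outside $B_{\rho/2}(0)\subset\Omega$, on which $\varphi\equiv1$, so that $\int_\Omega\frac{|\varphi u_\varepsilon|^q}{|x|^s}\,dx\ge\int_{B_{\rho/2}(0)}\frac{u_\varepsilon^q}{|x|^s}\,dx$. Writing $u_\varepsilon^q(x)=\varepsilon^{(N-\alpha)q/2}(\varepsilon^2+|x|^2)^{-(N-\alpha)q/2}$ and rescaling $x=\varepsilon y$ pulls out exactly the power $\varepsilon^{\,N-s-(N-\alpha)q/2}=\varepsilon^{\frac{\alpha-N}{2}q+N-s}$ and leaves $\int_{B_{\rho/(2\varepsilon)}(0)}\frac{dy}{|y|^s(1+|y|^2)^{(N-\alpha)q/2}}$; for $\varepsilon$ small the domain contains $B_1(0)$, so this integral is at least $2^{-(N-\alpha)q/2}\int_{B_1(0)}|y|^{-s}\,dy$, a fixed positive constant since $s<\alpha<N$, and the claimed lower bound follows. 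I do not expect any real obstacle here: (ii) and (iii) are one-line rescalings once the exponents are tracked, and the only genuinely analytic ingredient is the pointwise control of $w_\varepsilon$ and $\nabla w_\varepsilon$ away from the origin used in (i), which comes directly from the Poisson kernel formula \eqref{eq:3.1.1} and is recorded in \cite{BCP}. I would also note that $N>2\alpha-s$ is not actually needed for the lower bound in (iii) itself; it is used later, to ensure that the gained power $\varepsilon^{\frac{\alpha-N}{2}q+N-s}$ dominates the error $\varepsilon^{N-\alpha}$ coming from (i).
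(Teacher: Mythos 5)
Your proofs of (ii) and (iii) are exactly the paper's argument: bound the difference by the tail of $u_\varepsilon^{2_\alpha^*}=\varepsilon^N(\varepsilon^2+|x|^2)^{-N}$ outside $B_{\rho/2}$, respectively restrict to $B_{\rho/2}$ where $\varphi\equiv 1$, and rescale $x=\varepsilon y$; your observation that $N>2\alpha-s$ is only needed later to beat the $O(\varepsilon^{N-\alpha})$ error is also accurate. For (i) the paper simply cites \cite{BCP}, as you do, so your extra sketch is harmless (though a uniform pointwise bound on $|\nabla w_\varepsilon|$ up to $y=0$ is not immediate from the Poisson formula alone; the rigorous version integrates the bound $|\nabla w_\varepsilon|\leq Cy^{-1}w_\varepsilon$ against the weight $y^{1-\alpha}$ on the annulus, as in Lemma \ref{lem:3.2.5}).
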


\begin{proof}[{\bf Proof of Theorem \ref{thm:1.4}:}]
\ \ Without loss of generality, we assume that $\lambda=1$ . First, we treat the case (i).
 We may verify that the function
$$
\bar f(t)= \frac {t^2}2\int_{\mathbb R_+^{N+1}} |\nabla
w_\varepsilon|^2\,dxdy-\frac{t^{2_\alpha^*}}{2_\alpha^*}\int_{\mathbb R^N}
|w_\varepsilon(x,0)|^{2_\alpha^*}\,dx:= \frac {t^2}2 A - \frac{t^{2_\alpha^*}}{2_\alpha^*}B
$$
attains its maximum $\frac{\alpha}{2N}(S_{\alpha}^E)^{\frac
N\alpha}$ at $t_0 = (\frac AB)^{\frac 1{2^*_\alpha-2}}$. Consider the function $f:[0,\infty)\to \mathbb R$, where $f$ is defined by $ f(t)=I(t(\varphi w_\varepsilon))$. Obviously, $f(t)$
attains its maximum at some $t_\varepsilon>0$, and it is standard to see that $t_\varepsilon\to
t_0$ as $\varepsilon \to 0$. In terms of Lemma \ref{lem:3.1.1}, we deduce that
$$
f(t_\varepsilon)\leq \frac {t_0^2}2\int_{\mathbb R_+^{N+1}} |\nabla
w_\varepsilon|^2\,dxdy-\frac{t_0^{2_\alpha^*}}{2_\alpha^*}\int_{\mathbb R^N}
|w_\varepsilon(x,0)|^{2_\alpha^*}\,dx+C\varepsilon^{N-\alpha}+C\varepsilon^N-C\varepsilon^{\frac{(\alpha-N)q}{2}+N-s},
$$
which implies by the assumption $N>\frac{2(\alpha-s)}{q}+\alpha$ that
$$
f(t_\varepsilon)< \frac{\alpha}{2N}(S_{\alpha}^E)^{\frac N\alpha}
$$
for $\varepsilon$ small enough. Since $I$ satisfies the $(PS)_c$
condition for $c\in (0,\frac{\alpha}{2N}(S_{\alpha}^E)^{\frac
N\alpha})$, then it follows from the Mountain Pass theorem that the
functional $I$ possesses at least one nontrivial critical point.

Next, we deal with the case (ii). In this case, it is easy to verify that $I$
possesses the Mountain Pass structure since $0<\mu<\mu_s$. Moreover,
it was proved in \cite{BCP} that $I$ satisfies the $(PS)_c$
condition for $c\in (0,\frac{\alpha}{2N}(S_{\alpha}^E)^{\frac
N\alpha})$. To prove that $I$ possesses a nontrivial critical point,
it is sufficient to show that the Mountain Pass level of $I$ is
below $\frac{\alpha}{2N}(S_{\alpha}^E)^{\frac N\alpha}$. With the
same notations as above, we have
$$
f(t_\varepsilon)\leq \frac {t_0^2}2\int_{\mathbb R_+^{N+1}} |\nabla
w_\varepsilon|^2\,dxdy-\frac{t_0^{2_\alpha^*}}{2_\alpha^*}\int_{\mathbb R^N}
|w_\varepsilon(x,0)|^{2_\alpha^*}\,dx+C\varepsilon^{N-\alpha}+C\varepsilon^N-C\varepsilon^{\alpha-s}.
$$
Since $N>2\alpha-s$, we deduce that
$$
f(t_\varepsilon)<\frac{\alpha}{2N}(S_{\alpha}^E)^{\frac N\alpha}
$$
for $\varepsilon$ small enough. This proves (ii).

\end{proof}

\bigskip
\subsection {Singular term with Hardy-Sobolev critical exponent}
\bigskip
We assume in this subsection that $2\leq r< 2^*_\alpha,\, q = 2^*_\alpha(s)$. We first establish a prior bound for  the ground state solution of
\begin{equation}\label{eq:3.2.1}
(-\Delta)^{\frac \alpha 2} u=\frac{u^{2_\alpha^*(s)-1}}{|x|^s} \quad{\rm in }\quad \mathbb
R^N.
\end{equation}
The extension $v\in \mathcal{D}^{1,2}(y^{1-\alpha},\mathbb{R}^{N+1}_+)$ of the ground state solution $u$ of \eqref{eq:3.2.1} satisfies
\begin{equation}\label{eq:3.2.2}
 \left\{
  \begin{array}{ll}
  \displaystyle
div (y^{1-\alpha} \nabla v)= 0 \quad &{\rm in }\quad \mathbb{R}^{N+1}_+,\\
 \lim_{y\to 0}(y^{1-\alpha}\frac{\partial v}{\partial \nu})= \frac{v^{2_\alpha^*(s)-1}}{|x|^s} \quad &{\rm in }\quad \mathbb{R}^N,    \\
\end{array}
\right.
 \end{equation}
where $\mathcal{D}^{1,2}(y^{1-\alpha},\mathbb{R}^{N+1}_+)$ is defined as the closure of $C^\infty_0(\overline{\mathbb{R}^{N+1}_+})$ under the norm
\[
\|w\|=\bigg( \int_{\mathbb{R}^{N+1}_+}y^{1-\alpha}|\nabla w|^2\,dxdy\bigg)^{\frac12}.
\]
The function $v$ is also related to the minimizer of the variational problem
$$
S_{s,\alpha}^E=\inf_{w\in \mathcal D^{1,2}(y^{1-\alpha}, \mathbb
R_+^{N+1})\setminus
\{0\}}\frac{\int_{\mathbb{R}^{N+1}_+}y^{1-\alpha}|\nabla
w(x,y)|^2\,dxdy}{(\int_{\mathbb{R}^{N}}\frac{|w(x,0)|^{2_\alpha^*(s)}}{|x|^s}\,dx)^{\frac{2}{2_\alpha^*(s)}}}.
$$
To find decaying laws for solutions of \eqref{eq:3.2.2}, we consider the linear problem
\begin{equation}\label{eq:3.2.3}
 \left\{
  \begin{array}{ll}
  \displaystyle
div (y^{1-\alpha} \nabla w)= 0 \quad &{\rm in }\quad \mathbb{R}^{N+1}_+,\\
 \lim_{y\to 0}(y^{1-\alpha}\frac{\partial w}{\partial \nu})= \frac{a(x)}{|x|^s}w \quad &{\rm in }\quad \mathbb{R}^N.    \\
\end{array}
\right.
 \end{equation}

For $D\subset \mathbb{R}^{N+1}_+$, $\partial D$ is the boundary of
$D$ in $\mathbb{R}^{N+1}_+$. Denote $\partial'D = \bar D\cap
\partial \mathbb{R}^{N+1}_+$ and $\partial''D = \partial D\setminus
\partial'D$.
\begin{Lemma}\label{lem:3.2.1}
Suppose $a\in L^{\frac{\alpha - s}{N-s}}(\mathbb{R}^N,\frac 1{|x|^s})$ and $w\in \mathcal{D}^{1,2}(y^{1-\alpha},\mathbb{R}^{N+1}_+), w>0$ is a solution of \eqref{eq:3.2.3}. Then, there exists a positive constant $C = C(N,s,\alpha,\|a\|_{L^{\frac{\alpha - s}{N-s}}(\mathbb{R}^N,\frac 1{|x|^s})})$ such that
\[
\sup_{Q_{\frac 12}}w\leq C \|w\|_{L^2(y^{1-\alpha},Q_R)}
\]
\end{Lemma}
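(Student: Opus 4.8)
The plan is to establish this local boundedness estimate by a Moser iteration adapted to the degenerate operator $\operatorname{div}(y^{1-\alpha}\nabla\cdot)$ with the nonlocal Neumann-type boundary condition on $\partial'Q_R$. The first step is to fix the geometry: let $Q_r = B_r^+\times\{0\}\cup B_r^+$ denote (as is standard in the Caffarelli--Silvestre framework, e.g.\ in \cite{CaT, BCPa}) the half-balls in $\overline{\mathbb{R}^{N+1}_+}$, and $\partial'Q_r$ the flat part on $\{y=0\}$. The key structural input is the weighted Sobolev-trace inequality: there is $C_0 = C_0(N,\alpha)$ so that for all $\varphi\in C_0^\infty(\overline{B_r^+})$,
\begin{equation*}
\Big(\int_{B_r}|\varphi(x,0)|^{2\gamma}\,dx\Big)^{1/\gamma}\leq C_0\int_{B_r^+}y^{1-\alpha}|\nabla\varphi|^2\,dxdy,
\end{equation*}
where $\gamma = \tfrac{N}{N-\alpha}$ is the trace-Sobolev exponent; combined with the weighted Sobolev inequality in the interior (with exponent $2\gamma' = \tfrac{2(N+1)}{N+1-(2-\alpha)} $ for the $(N{+}1)$-variable, $A_2$-weight $y^{1-\alpha}$) this gives a gain of integrability in each iteration step.

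\medskip

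The second step is the Caccioppoli-type inequality. For $\beta\geq 1$ and a cutoff $\eta$ supported in $Q_R$, one tests \eqref{eq:3.2.3} with $\eta^2 w^{\beta}$ (truncated at level $L$ to make this admissible, i.e.\ use $w_L = \min(w,L)$ and test with $\eta^2 w w_L^{2\beta-2}$, then let $L\to\infty$ at the end). The interior term produces $\int y^{1-\alpha}|\nabla(\eta w_L^{\beta-1}w^{1/2}\text{-type quantity})|^2$ up to lower-order errors absorbed via Young's inequality, and the boundary term contributes
\begin{equation*}
\int_{\partial' Q_R}\frac{a(x)}{|x|^s}\,\eta^2\, w^{2\beta}\,dx.
\end{equation*}
Here is where the hypothesis $a\in L^{(N-s)/(\alpha-s)}\big(\mathbb{R}^N,\tfrac{1}{|x|^s}\big)$ enters. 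Apply Hölder on this term with the weight $|x|^{-s}dx$ split as $|x|^{-s\theta}\cdot|x|^{-s(1-\theta)}$; the factor carrying $a$ goes into $L^{(N-s)/(\alpha-s)}(\tfrac{dx}{|x|^s})$, and the remaining factor $\int |x|^{-s}w^{2\beta\cdot p'}$ must be controlled. One checks that the conjugate exponent is chosen precisely so that $\int |x|^{-s}(\eta w^{\beta})^{2p'}\,dx$ is dominated, via the fractional Hardy--Sobolev trace inequality \eqref{eq:1.4} applied to $\eta w^{\beta}$, by $\int y^{1-\alpha}|\nabla(\eta w^\beta)|^2$. The arithmetic is: $2p' \cdot (\text{the Hardy--Sobolev exponent relation})$ works out because $\tfrac{N-s}{\alpha - s}$ is the scaling-critical integrability of $a$ — this is the analogue of the $L^{N/\alpha}$ condition for the scalar fractional Laplacian and of the classical $L^{N/2}$ condition for $-\Delta$. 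A smallness step is needed: by absolute continuity of the integral, $\|a\|_{L^{(N-s)/(\alpha-s)}(B_\delta, \,dx/|x|^s)}$ is small for $\delta$ small, so near the origin the boundary term is absorbed into the left-hand side; away from the origin the weight $|x|^{-s}$ is bounded and the term is handled by the plain trace inequality exactly as in the $s=0$ Brezis--Nirenberg-type argument of \cite{BCP}. The net conclusion of this step is a reverse-Hölder inequality $\|w\|_{L^{2\chi\beta}(Q_{r'})}\leq \big(C\beta/(r-r')^2\big)^{1/\beta}\|w\|_{L^{2\beta}(Q_r)}$ for some fixed $\chi>1$ and all $\beta\geq 1$.

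\medskip

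The third step is the iteration itself: set $\beta_k = \chi^k$, $r_k = \tfrac12 + 2^{-k-1}R$ (say $R\le 1$, the general case following by scaling), and iterate the reverse-Hölder inequality; the product $\prod_k (C\beta_k)^{1/\beta_k}$ converges since $\sum_k k\chi^{-k}<\infty$, yielding $\sup_{Q_{1/2}}w\leq C\|w\|_{L^2(y^{1-\alpha},Q_R)}$ with $C$ depending only on $N,s,\alpha$ and $\|a\|_{L^{(N-s)/(\alpha-s)}(\mathbb{R}^N,\,dx/|x|^s)}$, as claimed. (One must be a little careful that the iteration is on the mixed norm: at each step the interior Sobolev inequality upgrades $L^2(y^{1-\alpha})$ to $L^{2\gamma'}(y^{1-\alpha})$ and the trace inequality upgrades the flat-slice norm; taking $\chi = \min(\gamma,\gamma')>1$ makes both improvements simultaneous, and the boundary term only ever sees the flat slice, so no incompatibility arises.)

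\medskip

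\textbf{Main obstacle.} The genuinely delicate point is the treatment of the boundary integral $\int_{\partial'Q_R}\tfrac{a(x)}{|x|^s}\eta^2 w^{2\beta}$ near the origin: one must verify that the Hölder/Hardy--Sobolev bookkeeping closes at the exact critical exponent $\tfrac{N-s}{\alpha-s}$ (so that no room is lost and the constant is scale-invariant), and that the singular weight $|x|^{-s}$ does not destroy the absorption — this is why the fractional Hardy--Sobolev inequality \eqref{eq:1.4}, rather than the plain trace-Sobolev inequality, is the right tool there, and why the smallness-of-$\|a\|$-on-small-balls reduction is essential. The rest (Caccioppoli, the interior degenerate-elliptic Sobolev inequality for the $A_2$ weight $y^{1-\alpha}$, and the Moser iteration) is by now routine in the Caffarelli--Silvestre literature.
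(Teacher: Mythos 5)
Your proposal follows essentially the same route as the paper: a Moser iteration with truncated power test functions, Hölder's inequality on the boundary term against the scaling-critical weighted norm $\|a\|_{L^{(N-s)/(\alpha-s)}(dx/|x|^s)}$ (note the exponent in the lemma statement is a typo; the proof uses $\tfrac{N-s}{\alpha-s}$, as you do), absorption via the fractional Hardy--Sobolev trace inequality together with smallness of the $a$-norm on a small support, and then the standard iteration (which the paper completes by citing Proposition 3.1 of \cite{TX}). Your identification of the critical-exponent bookkeeping and the smallness/absorption step as the delicate point matches the paper's argument.
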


\begin{proof} Denote $Q_R = B_R(y)\times (0,R)$. Suppose $0<r<R\leq 1$. Let $\xi\in C^1_0(Q_1\cup\partial'Q_1)$ with $\xi =1$ in $Q_r$, $\xi =0$ outside $Q_R$ and $|\nabla \xi|\leq \frac 2{R-r}$. Let $k>0$ be any number which is eventually sent to $0$.  Let $q =\frac p{2^*}$, $\bar w = w+k$. Define
\[
 \bar w_m = \left\{
  \begin{array}{ll}
  \displaystyle
\bar w \quad &{\rm if }\quad \bar w<m,\\
 k+m \quad &{\rm if }\quad \bar w\geq m.    \\
\end{array}
\right.
\]
Consider the test function
\[
\eta = \xi^2(\bar w_m^{2q-2}\bar w - k^{2q-1})\in H^1_{0,L}(\mathbb{R}^{N+1}_+).
\]
By \eqref{eq:3.2.3},
\[
\int_{\mathbb{R}^{N+1}_+}y^{1-\alpha}\nabla w\nabla \eta\,dxdy = \int_{\mathbb{R}^{N}}\frac {a(x)}{|x|^s}w\eta\,dx.
\]
We deduce
\[
\begin{split}
&\int_{\mathbb{R}^{N+1}_+}y^{1-\alpha}\nabla w\nabla \eta\,dxdy\\
&= \int_{\mathbb{R}^{N+1}_+}y^{1-\alpha}\xi^2\bar w_m^{2q-2}[(2q-2)|\nabla\bar w_m|^2 + |\nabla w|^2]\,dxdy\\
&+ 2\int_{\mathbb{R}^{N+1}_+}y^{1-\alpha}\xi(\bar w_m^{2q-2}\bar w - k^{2q-1})\nabla \xi\nabla w\,dxdy\\
&\geq \int_{\mathbb{R}^{N+1}_+}y^{1-\alpha}\xi^2\bar w_m^{2q-2}[(2q-2)|\nabla\bar w_m|^2 + |\nabla w|^2]\,dxdy\\
&- \int_{\mathbb{R}^{N+1}_+}y^{1-\alpha}(C_\epsilon|\nabla\xi|^2\bar w_m^{2q-2}w^2 + \epsilon\xi^2\bar w_m^{2q-2}|\nabla w|^2)\,dxdy.\\
\end{split}
\]
On the other hand, by H\"{o}lder's inequality,
\[
\begin{split}
&\int_{\mathbb{R}^{N}}\frac {a(x)}{|x|^s}w\eta\,dx\\
&\leq \Big(\int_{\mathbb{R}^{N}\cap supp \xi}\frac {|a(x)|^{\frac{N-s}{\alpha-s}}}{|x|^s}\,dx\Big)^{\frac{\alpha-s}{N-s}}\Big(\int_{\mathbb{R}^{N}}\frac {|\xi\bar w_m^{q-1}w(x,0)|^{2^*_{s,\alpha}}}{|x|^s}\,dx\Big)^{\frac 2{2^*_{s,\alpha}}}.\\
\end{split}
\]
Hence,
\[
\begin{split}
&\int_{\mathbb{R}^{N+1}_+}y^{1-\alpha}\xi^2\bar w_m^{2q-2}[(2q-2)|\nabla\bar w_m|^2 + |\nabla w|^2]\,dxdy\\
&\leq C_\epsilon\int_{\mathbb{R}^{N+1}_+}y^{1-\alpha}|\nabla\xi|^2\bar w_m^{2q-2}w^2\,dxdy\\
& + \Big(\int_{\mathbb{R}^{N}\cap supp \xi}\frac {|a(x)|^{\frac{N-s}{\alpha-s}}}{|x|^s}\,dx\Big)^{\frac{\alpha-s}{N-s}}\Big(\int_{\mathbb{R}^{N}}\frac {|\xi\bar w_m^{q-1}w(x,0)|^{2^*_{s,\alpha}}}{|x|^s}\,dx\Big)^{\frac 2{2^*_{s,\alpha}}}.\\
\end{split}
\]
Let $U = w_m^{q-1}w$. We have
\[
|\nabla U|^2\leq C(2q-1)[(2q-2)w_m^{2q-2}|\nabla \bar w_m|^2 + w_m^{2q-2}|\nabla \bar w|^2],
\]
and then
\[
\begin{split}
&\int_{\mathbb{R}^{N+1}_+}y^{1-\alpha}\xi^2|\nabla U|^2\,dxdy\\
&\leq C(2q-1)\Big[ \int_{\mathbb{R}^{N+1}_+}y^{1-\alpha}\xi^2|U|^2\,dxdy\\
& + \Big(\int_{\mathbb{R}^{N}\cap supp \xi}\frac {|a(x)|^{\frac{N-s}{\alpha-s}}}{|x|^s}\,dx\Big)^{\frac{\alpha-s}{N-s}}\Big(\int_{\mathbb{R}^{N}}\frac {|\xi U(x,0)|^{2^*_{s,\alpha}}}{|x|^s}\,dx\Big)^{\frac 2{2^*_{s,\alpha}}}\Big].\\
\end{split}
\]
Consequently,
\[
\begin{split}
&\int_{\mathbb{R}^{N+1}_+}y^{1-\alpha}|\nabla(\xi U)|^2\,dxdy\\
&\leq C(2q-1)\Big[ \int_{\mathbb{R}^{N+1}_+}y^{1-\alpha}|\xi U|^2\,dxdy\\
& + \Big(\int_{\mathbb{R}^{N}\cap supp \xi}\frac {|a(x)|^{\frac{N-s}{\alpha-s}}}{|x|^s}\,dx\Big)^{\frac{\alpha-s}{N-s}}\Big(\int_{\mathbb{R}^{N}}\frac {|\xi U(x,0)|^{2^*_{s,\alpha}}}{|x|^s}\,dx\Big)^{\frac 2{2^*_{s,\alpha}}}\Big].\\
\end{split}
\]
By the Sobolev inequality and Hardy-Sobolev inequality,
\[
\begin{split}
&\int_{\mathbb{R}^{N+1}_+}y^{1-\alpha}|\nabla(\xi U)|^2\,dxdy\\
&\geq\frac 12 C\Big(\int_{\mathbb{R}^{N}}\frac {|\xi U(x,0)|^{2^*_{s,\alpha}}}{|x|^s}\,dx\Big)^{\frac 2{2^*_{s,\alpha}}}
 +\frac 12\int_{\mathbb{R}^{N+1}_+}y^{1-\alpha}|\nabla(\xi U)|^2\,dxdy\\
\end{split}
\]
Since $a\in L^{\frac{\alpha - s}{N-s}}(\mathbb{R}^N,\frac 1{|x|^s}\,dx)$, we may choose $R>0$ small so that
\[
\begin{split}
&\int_{\mathbb{R}^{N+1}_+}y^{1-\alpha}|\nabla(\xi U)|^2\,dxdy\leq C(2q-1) \int_{\mathbb{R}^{N+1}_+}y^{1-\alpha}|\xi U|^2\,dxdy\\
\end{split}
\]
Now, the proof can be completed as the proof of Proposition 3.1 in \cite{TX}.

\end{proof}

It was proved in \cite{Yang} that problem \eqref{eq:3.2.2} possesses a
ground state solution $u$, which is radially symmetric and monotonicity decreasing in $|x|$. One may verify that $u_\varepsilon(x)=\varepsilon^{\frac{\alpha-N}2}u(\frac x \varepsilon)$ also solves problem \eqref{eq:3.2.2} for any $\varepsilon>0$. However, it is not known the explicit formula of the ground state solution. Let $w$ be the extension of $u$. To
study critical problem, we need to establish decaying law of $w$ at infinity.
\begin{Lemma}\label{lem:3.2.2}
Suppose $w\in \mathcal{D}^{1,2}(y^{1-\alpha},\mathbb{R}^{N+1}_+)$ is the extension of a
ground state solution $u$ of \eqref{eq:3.2.2}. Then, there exists a
positive constant $C>0$ such that
\[
w(x,y)\leq \frac
C{(1+|X|^2)^{\frac {N-\alpha}2}}
\]
for $X=(x,y)\in  \mathbb{R}^{N+1}_+$.
\end{Lemma}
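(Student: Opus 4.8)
The plan is to convert the decay of $w$ at infinity into the boundedness of its Kelvin transform near the origin, and then to extract the former from the latter by a regularity estimate of the type already proved in Lemma~\ref{lem:3.2.1}. Write $X=(x,y)\in\mathbb{R}^{N+1}_+$, $X^*=X/|X|^2$, and set
\[
\widetilde w(X)=|X|^{-(N-\alpha)}w(X^*),\qquad \widetilde u(x)=\widetilde w(x,0)=|x|^{-(N-\alpha)}u\big(x/|x|^2\big).
\]
Since the inversion $X\mapsto X^*$ is conformal and carries $|x|$ to $|x|^{-1}$, both the weighted Dirichlet integral $\int_{\mathbb{R}^{N+1}_+}y^{1-\alpha}|\nabla(\cdot)|^2\,dxdy$ and the weighted Hardy--Sobolev integral $\int_{\mathbb{R}^N}|(\cdot)|^{2_\alpha^*(s)}|x|^{-s}\,dx$ are invariant under $w\mapsto\widetilde w$; hence $\widetilde w\in\mathcal{D}^{1,2}(y^{1-\alpha},\mathbb{R}^{N+1}_+)$ and $\int_{\mathbb{R}^N}\widetilde u^{2_\alpha^*(s)}|x|^{-s}\,dx=\int_{\mathbb{R}^N}u^{2_\alpha^*(s)}|x|^{-s}\,dx<\infty$. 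Moreover, the covariance relation $(-\Delta)^{\frac\alpha2}\widetilde u(x)=|x|^{-(N+\alpha)}\big[(-\Delta)^{\frac\alpha2}u\big](x^*)$, combined with $|x^*|^{-s}=|x|^{s}$ and the identity $(N-\alpha)(2_\alpha^*(s)-1)=N+\alpha-2s$, shows that $\widetilde u$ again solves \eqref{eq:3.2.1} in $\mathbb{R}^N\setminus\{0\}$, so that $\widetilde w$ solves \eqref{eq:3.2.2} on $\mathbb{R}^{N+1}_+$ away from $(0,0)$, the image of infinity.

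Granting this, I would proceed in three steps. \emph{Step 1.} Apply Lemma~\ref{lem:3.2.1} to $w$ itself, which is of the form \eqref{eq:3.2.3} with $a(x)=u(x)^{2_\alpha^*(s)-2}$: since $\big(2_\alpha^*(s)-2\big)\tfrac{N-s}{\alpha-s}=2_\alpha^*(s)$ and $u\in L^{2_\alpha^*(s)}(\mathbb{R}^N,|x|^{-s})$ by the fractional Hardy--Sobolev inequality, we have $a\in L^{\frac{N-s}{\alpha-s}}(\mathbb{R}^N,|x|^{-s})$; a covering argument then gives $w\in L^\infty_{\rm loc}(\overline{\mathbb{R}^{N+1}_+})$, in particular $w$ is bounded on every bounded subset. \emph{Step 2.} Because $\widetilde w\in\mathcal{D}^{1,2}(y^{1-\alpha},\mathbb{R}^{N+1}_+)$ and a single point has zero $\mathcal{D}^{1,2}(y^{1-\alpha})$-capacity (the weight $y^{1-\alpha}$ being of Muckenhoupt class $A_2$), the singularity of $\widetilde w$ at $(0,0)$ is removable and $\widetilde w$ is a bona fide positive finite-energy solution of \eqref{eq:3.2.2}. \emph{Step 3.} Apply Lemma~\ref{lem:3.2.1} to $\widetilde w$ with $a(x)=\widetilde u(x)^{2_\alpha^*(s)-2}$; the same arithmetic together with the Kelvin invariance noted above shows $a\in L^{\frac{N-s}{\alpha-s}}(\mathbb{R}^N,|x|^{-s})$ with the same norm as for $u$, so that
\[
\sup_{Q_{1/2}}\widetilde w\le C\,\|\widetilde w\|_{L^2(y^{1-\alpha},Q_R)}<\infty
\]
with the cylinders centred at the origin; hence $\widetilde w\le M$ on some half-ball $\overline{B^+_\delta(0)}$.

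Finally, unwinding the transform: for $|X|\ge 1/\delta$ we have $X^*\in B^+_\delta(0)$, so
\[
w(X)=|X|^{-(N-\alpha)}\widetilde w(X^*)\le M\,|X|^{-(N-\alpha)}\le \frac{C}{(1+|X|^2)^{\frac{N-\alpha}2}},
\]
while for $|X|\le 1/\delta$ the bound $w(X)\le\|w\|_{L^\infty(\overline{B^+_{1/\delta}(0)})}\le C(1+|X|^2)^{-\frac{N-\alpha}2}$ is immediate from Step~1; combining the two ranges yields the claim. (Alternatively, once the pointwise decay of $u=w(\cdot,0)$ is in hand, the same estimate for $w$ on all of $\mathbb{R}^{N+1}_+$ follows by substituting it into the Poisson representation \eqref{eq:3.1.1} and estimating the resulting convolution.)

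The delicate point is the passage to the image of infinity: one must make sure that forming $\widetilde w$ neither destroys the finiteness of the weighted energy nor produces a distributional mass at $(0,0)$, so that Lemma~\ref{lem:3.2.1} genuinely applies there — this is exactly what the conformal invariance of $\int y^{1-\alpha}|\nabla\cdot|^2$ and of the weighted Hardy--Sobolev integral, together with the zero capacity of a point, are used for. After that, the argument is the Moser iteration already carried out in Lemma~\ref{lem:3.2.1}, the only thing to monitor being that $\widetilde u^{2_\alpha^*(s)-2}$ has small weighted $L^{\frac{N-s}{\alpha-s}}$ norm on small cylinders, which holds because it lies globally in that space.
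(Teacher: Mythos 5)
Your argument is essentially the paper's own proof: the paper likewise forms the Kelvin transform $\tilde w(X)=|X|^{-(N-\alpha)}w\big(X/|X|^2\big)$, notes it is again a finite-energy solution of \eqref{eq:3.2.2}, applies Lemma \ref{lem:3.2.1} with $a=\tilde w^{\,2^*_\alpha(s)-2}(\cdot,0)$ to get boundedness of $\tilde w$ on $Q_{1/2}$, and then unwinds the transform to obtain $w(X)\le C|X|^{\alpha-N}$ for $|X|\ge \tfrac12$, the bound for $|X|\le\tfrac12$ coming from the same local estimate applied to $w$ itself. Your extra verifications --- the conformal invariance of the weighted energy, the removability of the point singularity at the image of infinity, and the exponent arithmetic giving $a\in L^{\frac{N-s}{\alpha-s}}(\mathbb{R}^N,|x|^{-s})$ (the exponent $\tfrac{\alpha-s}{N-s}$ in the statement of Lemma \ref{lem:3.2.1} is a typo for its reciprocal, as the H\"older step in its proof shows) --- simply make explicit what the paper asserts without proof.
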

\bigskip

\begin{proof}
 Consider the Kelvin transformation
\[
\tilde w(X) = |X|^{-N+\alpha}w\bigg(\frac X{|X|^2}\bigg)
\]
of $w$. If $w$ is a
solution of \eqref{eq:3.2.2}, then $\tilde w$ is also a solution of
problem \eqref{eq:3.2.2}. Moreover,  we have
\begin{equation}\label{eq:3.2.5}
\int_{ \mathbb{R}^{N+1}_+}y^{1-\alpha}|\nabla \tilde w|^2\,dxdy\leq
C,\quad \int_{\mathbb{R}^{N}}|\tilde w(x,0)|^{2^*_\alpha}\,dx\leq C.
\end{equation}
In equation \eqref{eq:3.2.2}, we choose $a(x) = \tilde
w^{2^*_\alpha(s)-2}(x,0)$, then $a\in L^{\frac{\alpha -
s}{N-s}}(\mathbb{R}^N,\frac 1{|x|^s})$. By Lemma \ref{lem:3.2.1},
$\tilde w\in L^\infty_{loc}(\mathbb{R}^{N+1}_+)$. It results that
for $X\in Q_{\frac 12}$, we have $|X|^{-N+\alpha}w\bigg(\frac
X{|X|^2}\bigg)\leq C$ for a positive constant $C$. Hence, if $|X|\geq \frac 12$, we have $w(X)\leq C |X|^{\alpha- N}$.
In the same way, we have $w(X)\leq C$ if $|X|\leq \frac 12$. The assertion follows.
\end{proof}
 Now we give a decay estimate for the gradient of the ground state solution $u$.
\begin{Lemma}\label{lem:3.2.3}
Let $u\in \dot{H}^{\frac \alpha 2}(\mathbb{R}^{N} )$ be a positive solution $u$ of \eqref{eq:3.2.1}. Then, there exists a positive constant $C>0$ such that
\begin{equation}\label{eq:3.2.5a1}
u(x)\geq \frac C{(1+|x|^2)^{\frac{N-\alpha}2}}
\end{equation}
if $x\in \mathbb{R}^N$, and
\begin{equation}\label{eq:3.2.5a}
|\nabla u(x)|\leq  C|x|^{-(N+1-\alpha)}
\end{equation}
for $|x|>0$.
\end{Lemma}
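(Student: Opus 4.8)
The plan is to establish the lower bound \eqref{eq:3.2.5a1} and the gradient estimate \eqref{eq:3.2.5a} separately, using throughout the decay bound of Lemma \ref{lem:3.2.2} and the Riesz potential representation of $u$. For the lower bound, since $u(x)=w(x,0)\to 0$ as $|x|\to\infty$ by Lemma \ref{lem:3.2.2}, $u$ equals the Riesz potential of the right-hand side of \eqref{eq:3.2.1},
\[
u(x)=c_{N,\alpha}\int_{\mathbb R^N}\frac{f(z)}{|x-z|^{N-\alpha}}\,dz,\qquad f(z):=\frac{u(z)^{2_\alpha^*(s)-1}}{|z|^s}\ge 0.
\]
From $u\le C$ (Lemma \ref{lem:3.2.2}) and the identity $(N-\alpha)(2_\alpha^*(s)-1)=N-2s+\alpha$ one gets $f(z)\le C|z|^{-s}$ for $|z|\le 1$ and $f(z)\le C|z|^{-(N-s+\alpha)}$ for $|z|\ge 1$; since $0<s<\alpha<N$ this gives $f\in L^1(\mathbb R^N)$, $\int_{|z|>R}f\le CR^{s-\alpha}$ and $\int_{B_1}f>0$. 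Hence for $|x|\ge 2$, using $|x-z|\le 2|x|$ on $B_1$, $u(x)\ge c_{N,\alpha}(2|x|)^{\alpha-N}\int_{B_1}f\ge C|x|^{\alpha-N}$, while for $|x|\le 2$ the radial monotonicity of the ground state $u$ gives $u(x)\ge u(z)|_{|z|=2}\ge C2^{\alpha-N}>0$; together these yield \eqref{eq:3.2.5a1}.

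For the gradient decay at infinity, differentiate the representation to get $\nabla u(x)=c'_{N,\alpha}\int_{\mathbb R^N}(z-x)|x-z|^{\alpha-N-2}f(z)\,dz$, which for $\alpha\le 1$ is to be read via the cancellation $\int_{B_\rho(x)}(z-x)|x-z|^{\alpha-N-2}\,dz=0$ together with the Hölder continuity of $f$ away from the origin (interior Schauder-type regularity for $(-\Delta)^{\frac\alpha2}$, the equation \eqref{eq:3.2.1} having smooth coefficients on $\{|z|\sim|x|\}$). Fix $|x|\ge 2$ and split $\mathbb R^N$ into $D_1=\{|z|\le|x|/2\}$, $D_2=\{|z-x|\le|x|/2\}$ and the remainder $D_3$. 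On $D_1\cup D_3$ one has $|x-z|\ge|x|/2$, so those parts contribute at most $C|x|^{\alpha-N-1}\|f\|_{L^1}+C|x|^{s-N-1}\le C|x|^{-(N+1-\alpha)}$ (using $\int_{|z|>R}f\le CR^{s-\alpha}$ and $s<\alpha$). On $D_2$ we have $|z|\sim|x|$, hence $f(z)\le C|x|^{-(N-s+\alpha)}$; if $\alpha>1$ the kernel is locally integrable with $\int_{|w|<|x|/2}|w|^{\alpha-N-1}\,dw\le C|x|^{\alpha-1}$, and if $\alpha\le 1$ one replaces $f(z)$ by $f(z)-f(x)$, uses $[f]_{C^{0,\gamma}(D_2)}\le C|x|^{-(N-s+\alpha)-\gamma}$ (the scaled interior estimate for $u$ on $\{|z|\sim|x|\}$) and $\int_{|w|<|x|/2}|w|^{\gamma+\alpha-N-1}\,dw\le C|x|^{\gamma+\alpha-1}$; either way the $D_2$-part is $\le C|x|^{-(N-s+\alpha)+\alpha-1}=C|x|^{-(N-s+1)}\le C|x|^{-(N+1-\alpha)}$, again because $s<\alpha$ and $|x|\ge 2$. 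Thus $|\nabla u(x)|\le C|x|^{-(N+1-\alpha)}$ for $|x|\ge 2$.

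For the gradient near the origin, I would use the Kelvin transform $\tilde u(x)=|x|^{\alpha-N}u(x/|x|^2)$. A direct computation shows $\tilde u$ again solves \eqref{eq:3.2.1}, and since the Kelvin transform leaves both $\int|(-\Delta)^{\frac\alpha4}u|^2$ and $\int|u|^{2_\alpha^*(s)}|x|^{-s}$ invariant, $\tilde u$ is again a ground state; hence by Lemma \ref{lem:3.2.2} and the previous step, $\tilde u(\xi)\le C|\xi|^{\alpha-N}$ and $|\nabla\tilde u(\xi)|\le C|\xi|^{\alpha-N-1}$ for $|\xi|\ge 2$. Since the Kelvin transform is an involution, $u(x)=|x|^{\alpha-N}\tilde u(x/|x|^2)$; differentiating, using $|x/|x|^2|=1/|x|$ and $\|D(x/|x|^2)\|=|x|^{-2}$, for $|x|\le 1/2$,
\[
|\nabla u(x)|\le C|x|^{\alpha-N-1}\,\tilde u\!\Big(\tfrac{x}{|x|^2}\Big)+C|x|^{\alpha-N-2}\,\Big|\nabla\tilde u\!\Big(\tfrac{x}{|x|^2}\Big)\Big|\le C|x|^{\alpha-N-1}|x|^{N-\alpha}+C|x|^{\alpha-N-2}|x|^{N+1-\alpha}=C|x|^{-1},
\]
and since $N>\alpha$ this is $\le C|x|^{-(N+1-\alpha)}$ there. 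On the compact annulus $\{1/2\le|x|\le 2\}$, $u$ is smooth, so $|\nabla u|$ is bounded and hence $\le C|x|^{-(N+1-\alpha)}$ as well; combining the three regimes gives \eqref{eq:3.2.5a}.

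The delicate point is the region $z\approx x$ in the gradient estimate when $\alpha\le 1$, where $\nabla I_\alpha\notin L^1_{\mathrm{loc}}$: one must exploit the oddness of the kernel $(z-x)|x-z|^{\alpha-N-2}$ together with the correctly scaled Hölder regularity of $f$ on the annulus $\{|z|\sim|x|\}$, and then verify that the exponent $-(N-s+1)$ produced there is dominated by the asserted $-(N+1-\alpha)$, which holds precisely because $s<\alpha$. The bookkeeping of exponents across the three regions $D_1,D_2,D_3$ (and the matching of the large-$|x|$ decay with the origin singularity at the common rate $|x|^{\alpha-N-1}$) is the other place where care is needed.
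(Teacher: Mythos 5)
Your proposal is correct, and its skeleton coincides with the paper's: both obtain the far-field gradient bound by differentiating the Riesz potential representation $u(x)=\int_{\mathbb R^N}|y|^{-s}u(y)^{2_\alpha^*(s)-1}|x-y|^{\alpha-N}\,dy$, and both handle the region near the origin (and the inversion of the decay into the lower bound) through the Kelvin transform, whose transformed function is again a solution with the decay of Lemma \ref{lem:3.2.2}, followed by the chain rule on $u(x)=|x|^{\alpha-N}\tilde u(x/|x|^2)$ (the paper does this in radial variables, you with the full Jacobian — same computation). The differences are real but local. First, your lower bound is more elementary: you get $u(x)\gtrsim |x|^{\alpha-N}\int_{B_1}f$ directly from positivity of the Riesz kernel, whereas the paper deduces $v\geq C$ on $B_1$ for the Kelvin transform $v$ from its H\"older continuity (Proposition 2.4 of \cite{JLX}) and then inverts. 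Second, and more substantively, your three-region treatment of the differentiated kernel is more careful than the paper's: the paper bounds the integrand globally by $|y|^{-(N+\alpha-s)}|x-y|^{-(N+1-\alpha)}$ and invokes the Lieb--Loss convolution identity, but that majorant is not integrable near $y=0$ (the bound $u(y)^{2_\alpha^*(s)-1}\lesssim |y|^{-(N+\alpha-2s)}$ only holds for $|y|\geq 1$) nor near $y=x$ when $\alpha\leq 1$, and the identity requires both exponents to lie in $(0,N)$; your decomposition into $D_1,D_2,D_3$, with $f\lesssim |z|^{-s}$ near the origin, the tail bound $\int_{|z|>R}f\lesssim R^{s-\alpha}$, and the oddness-plus-scaled-H\"older cancellation on $D_2$ when $\alpha\leq 1$, is exactly what is needed to make that step rigorous, at the price of invoking interior regularity on annuli $\{|z|\sim|x|\}$ (which the paper anyway uses via Proposition 2.6 of \cite{JLX}). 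Both routes land on the same exponents, and your observation that the $D_2$ contribution $|x|^{-(N+1-s)}$ is dominated by $|x|^{-(N+1-\alpha)}$ precisely because $s<\alpha$ mirrors the paper's final step.
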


\begin{proof} First, we consider the case $|x|\geq 1$. By Lemma \ref{lem:3.2.2} and
Proposition 2.6 in \cite{JLX}, there exists $\beta\in (0,1)$  such
that the extension $w$ of $u$ belongs to $C^{1,\beta}_{loc}(\mathbb{R}^{N+1}_+\setminus Q_1)$. In
particularly, $u\in C^{1,\beta}_{loc}(\mathbb{R}^{N}\setminus
B_1(0))$. Obviously, solutions of \eqref{eq:3.2.1} can be expressed
by Riesz potential as
\begin{equation}\label{eq:3.2.5b}
u(x)=\int_{\mathbb
R^N}\frac{u(y)^{\frac{N+\alpha-2s}{N-\alpha}}}{|y|^s|x-y|^{N-\alpha}}\,dy.
\end{equation}
Therefore, for $|x|\geq 1$,
$$
\frac{\partial u}{\partial x_i}(x)=(\alpha-N) \int_{\mathbb
R^N}\frac{u(y)^{\frac{N+\alpha-2s}{N-\alpha}}}{|y|^s|x-y|^{N-(\alpha-1)}}\frac{x_i-y_i}{|x-y|}\,dy.
$$
We infer from Lemma \ref{lem:3.2.2} that
$$
|\frac{\partial u}{\partial x_i}(x)|\leq C \int_{\mathbb
R^N}\frac{1}{|y|^{N-s+\alpha}|x-y|^{N-(\alpha-1)}}\,dy=C|x|^{(s-1)-N},
$$
where the last equality follows from  page 132 in \cite{LR}. Since
$s<\alpha$, we have $|\frac{\partial u}{\partial x_i}(x)|\leq C
|x|^{(\alpha-1)-N}$, that is, \eqref{eq:3.2.5a} holds in the case $|x|\geq 1$.

Next, we deal with the case $0<|x|\leq 1$, and meanwhile we bound $u(x)$ from below. We note that the Kelvin transform $v(x)= \frac 1{|x|^{N-\alpha}}u(\frac x{|x|^2})$ of $u(x)$ is also a solution of \eqref{eq:3.2.1}, and so  $v$ satisfies \eqref{eq:3.2.5b}. As a consequence of Lemma \ref{lem:3.2.2},
\[
v(x) \leq \frac C{(1+|x|^2)^{\frac{N-\alpha}2}}
\]
for $x\in \mathbb{R}^N$. By Proposition 2.4 in  \cite{JLX}, $v$ is H\"{o}lder continuous. This implies that there exists a positive constant $C>0$ such that $v(x)\geq C$ if $|x|\leq 1$. In other word,
\[
u(x)\geq \frac C{|x|^{N-\alpha}}
\]
if $|x|\geq 1$, \eqref{eq:3.2.5a1} follows. Now, we prove \eqref{eq:3.2.5a} for $0<|x|\leq 1$. We know that
\[
|\nabla v(x)|\leq  C|x|^{-(N+1-\alpha)}
\]
holds for $|x|\geq 1$. We remark that $v$ is radially symmetric. Thus, if $0<r = |x|\leq 1$, we have $\frac 1r\geq 1$
and
\begin{equation}\label{eq:3.2.5c}
v\Big(\frac 1r\Big)\leq C r^{N-\alpha},\quad |v'\Big(\frac 1r\Big)|\leq Cr^{N-\alpha +1}.
\end{equation}
Furthermore, by \eqref{eq:3.2.5c}, for $0<r\leq 1$,
\[
u(r)= \frac 1{r^{N-\alpha}}v\Big(\frac 1{r}\Big)
\]
satisfies
\[
\begin{split}
|u'(r)| &= \Big|(\alpha-N)r^{\alpha-N-1}v\Big(\frac 1r\Big) - r^{\alpha-N-2}v'\Big(\frac 1r\Big)\Big|\\
&\leq Cr^{\alpha-N-1}r^{N-\alpha} + Cr^{\alpha-N-2}r^{N-\alpha +1}\\
&\leq Cr^{-1}.\\
\end{split}
\]
Consequently, if $0<r\leq 1$,
\[
|u'(r)|\leq \frac C{r^{N-\alpha+1}}.
\]
The assertion follows.

\end{proof}

The next lemma concerns some properties of the extension $w$ of
$u$.

\begin{Lemma}\label{lem:3.2.4}
Let $w$ be the extension of $u$.

(i) If $0<\alpha<2$, then $|\nabla w(x,y)|\leq \frac C yw(x,y)\leq
\frac C{y|(x,y)|^{N-\alpha}}$.

(ii) If $1\leq \alpha <2$, then $|\nabla w(x,y)|\leq \frac
C{(|x|^2+y^2)^{\frac{N-(\alpha-1)}{2}}}$.

(iii) Let $w_\varepsilon$ be the $\alpha$-extension of
$u_\varepsilon$, then $w_\varepsilon=\varepsilon^{\frac
{\alpha-N}2}w(\frac x\varepsilon,\frac y\varepsilon)$.
\end{Lemma}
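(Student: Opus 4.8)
The plan is to handle the three parts separately: (iii) is an elementary scaling identity, (i) is a short manipulation of the Poisson formula, and (ii) — the substantive part — combines (i) near the axis $\{x=0\}$ with a rescaling/degenerate‑regularity argument away from it. For part (iii) I would use that the extension is uniquely determined by the extension PDE together with its trace. Since $\mathrm{div}(y^{1-\alpha}\nabla\,\cdot\,)$ is invariant under the dilation $(x,y)\mapsto(\varepsilon x,\varepsilon y)$ up to multiplication by a constant, the function $v(x,y):=\varepsilon^{\frac{\alpha-N}2}w(x/\varepsilon,y/\varepsilon)$ again solves $\mathrm{div}(y^{1-\alpha}\nabla v)=0$ in $\mathbb{R}^{N+1}_+$, lies in $\mathcal D^{1,2}(y^{1-\alpha},\mathbb{R}^{N+1}_+)$, and has trace $v(x,0)=\varepsilon^{\frac{\alpha-N}2}u(x/\varepsilon)=u_\varepsilon(x)$; hence $v=w_\varepsilon$. (One could equally change variables directly in the Poisson formula \eqref{eq:3.1.1}.)

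For part (i) I would differentiate \eqref{eq:3.1.1}, $w(x,y)=c_{N,\alpha}\,y^\alpha\int_{\mathbb{R}^N}u(z)(|x-z|^2+y^2)^{-\frac{N+\alpha}2}\,dz$, under the integral sign. Differentiating in $x_i$ produces the extra factor $(N+\alpha)(x_i-z_i)(|x-z|^2+y^2)^{-1}$, of modulus at most $(N+\alpha)(|x-z|^2+y^2)^{-1/2}\le(N+\alpha)/y$, because $|x-z|\le(|x-z|^2+y^2)^{1/2}$ and $y\le(|x-z|^2+y^2)^{1/2}$; hence $|\nabla_x w|\le Cy^{-1}w$. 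Differentiating in $y$ produces $\alpha y^{-1}w$ plus a term whose modulus is at most $(N+\alpha)\,y^{\alpha-1}(|x-z|^2+y^2)^{-\frac{N+\alpha}2}$ (using $y^2(|x-z|^2+y^2)^{-1}\le1$), which integrates against $u$ to at most $(N+\alpha)y^{-1}w$; so $|\partial_y w|\le Cy^{-1}w$ as well. This gives the first inequality, and the second follows on inserting the bound $w(x,y)\le C(1+|(x,y)|^2)^{-\frac{N-\alpha}2}\le C|(x,y)|^{-(N-\alpha)}$ of Lemma \ref{lem:3.2.2}.

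For part (ii), write $R=|(x,y)|$. When $y\ge\tfrac12|x|$ one has $R\le\sqrt5\,y$, and then part (i) already gives $|\nabla w|\le Cy^{-1}R^{-(N-\alpha)}\le CR^{-(N-\alpha+1)}$, which is the assertion. In the complementary region $y<\tfrac12|x|$ one has $R\sim|x|$ and $(x,y)$ lies at distance $\sim R$ from the singular axis $\{x=0\}$, where the weight $|x|^{-s}$ in the conormal datum is smooth. I would freeze the point, pick a half‑ball $\mathcal Q$ of radius $\ell\sim R$ centred at $(x,0)$ with $(x,y)\in\mathcal Q$ and $\mathrm{dist}(\mathcal Q,0)\sim R$, and rescale by $\phi(Z):=C_0^{-1}\ell^{N-\alpha}w((x,0)+\ell Z)$ with $C_0:=\max\{1,\ell^{N-\alpha}\sup_{\mathcal Q}w\}$. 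By Lemma \ref{lem:3.2.2} one has $C_0\sim1$ and $0<\phi\le1$, and by the homogeneity exploited in part (iii) the function $\phi$ solves the same degenerate equation $\mathrm{div}(Z_{N+1}^{1-\alpha}\nabla\phi)=0$ on a fixed half‑ball, now with conormal datum on $\{Z_{N+1}=0\}$ a fixed multiple of $\ell^N C_0^{-1}\,u^{2_\alpha^*(s)-1}(x+\ell Z')\,|x+\ell Z'|^{-s}$. Since $|x+\ell Z'|\sim R$ on $\mathcal Q$, $u(x+\ell Z')\le C\min\{1,R^{-(N-\alpha)}\}$ by Lemma \ref{lem:3.2.2}, and $(N-\alpha)(2_\alpha^*(s)-1)+s=N-s+\alpha$, this rescaled datum is at most $C\min\{R^{\,s-\alpha},R^{\,N-s}\}\le C$ uniformly in $R$ — using $s<\alpha$ for large $R$ and $N>s$ for small $R$ — and it is $C^\beta$‑bounded because $u\in C^{1,\beta}_{\mathrm{loc}}(\mathbb{R}^N\setminus\{0\})$ (see the proof of Lemma \ref{lem:3.2.3}). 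As $\alpha\ge1$, the conormal exponent $1-\alpha$ is $\le0$, so the boundary $C^{1,\beta}$ estimates of Jin–Li–Xiong \cite{JLX} apply up to $\{Z_{N+1}=0\}$ and yield $|\nabla\phi|\le C$ on a smaller half‑ball containing the image of $(x,y)$. Undoing the rescaling, $|\nabla w(x,y)|=C_0\,\ell^{\alpha-N-1}|\nabla\phi|\le C\,\ell^{-(N-\alpha+1)}\le C\,(|x|^2+y^2)^{-\frac{N-(\alpha-1)}2}$. (For $R$ in a compact subset of $(0,\infty)$ the estimate is in any case immediate from the local $C^{1,\beta}$ regularity of $w$ up to $\{y=0\}$ away from the origin.)

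I expect the genuine obstacle to be precisely this last step: a pointwise bound on $\partial_y w$ up to the boundary $\{y=0\}$ away from the origin. This is exactly where the hypothesis $\alpha\ge1$ is indispensable — for $0<\alpha<1$ the normal derivative of such extensions generically blows up like $y^{\alpha-1}$, so (ii) would be false — and it is why one must invoke degenerate‑elliptic boundary regularity \cite{JLX} rather than the elementary Poisson‑kernel estimates that suffice for (i). The accompanying bookkeeping — that the rescaled conormal datum stays uniformly bounded with the correct power of $R$ — rests on the decay law $w(x,y)\le C|(x,y)|^{-(N-\alpha)}$ of Lemma \ref{lem:3.2.2} and on the strict inequality $s<\alpha$.
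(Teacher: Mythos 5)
Your parts (i) and (iii) run on the same lines as the paper (differentiation of the Poisson formula \eqref{eq:3.1.1}, resp.\ scaling invariance of the extension), but for (ii) you take a genuinely different route. The paper stays inside the kernel representation: writing $w(x,y)=\int P(\tilde z)\,u(x-y\tilde z)\,d\tilde z$ it moves the derivative onto $u$, invokes the decay $|\nabla u(z)|\leq C|z|^{-(N+1-\alpha)}$ of Lemma \ref{lem:3.2.3} (valid for all $|z|>0$), and then identifies the resulting convolution $\int \frac{y^{\alpha-1}}{(y^{2}+|x-z|^{2})^{\frac{N+\alpha-1}{2}}}\,|z|^{-(N+1-\alpha)}\,dz$ as a multiple of the $(\alpha-1)$-extension of $|x|^{-(N-(\alpha-1))}$, namely $(|x|^{2}+y^{2})^{-\frac{N-(\alpha-1)}{2}}$; the hypothesis $\alpha\geq1$ enters only to legitimize this kernel identity (cf.\ \cite{LR}). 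You instead split into $y\gtrsim|x|$, where (i) plus Lemma \ref{lem:3.2.2} already yields the homogeneous bound, and $y\lesssim|x|$, where you rescale to unit half-balls at unit distance from the axis and quote boundary regularity for the degenerate equation from \cite{JLX}. Your scheme works and isolates correctly why $\alpha\geq1$ is needed, but two steps should be stated with more care: the estimates of \cite{JLX} bound $\nabla_x\phi$ and the weighted derivative $Z_{N+1}^{1-\alpha}\partial_{Z_{N+1}}\phi$ in $C^{\beta}$ up to $\{Z_{N+1}=0\}$, not $|\nabla\phi|$ literally, so you must convert via $|\partial_{Z_{N+1}}\phi|\leq CZ_{N+1}^{\alpha-1}\leq C$ (using $\alpha\geq1$); and the uniform H\"older bound on the rescaled Neumann datum does not follow from $u\in C^{1,\beta}_{\rm loc}(\mathbb{R}^{N}\setminus\{0\})$ alone, since dilations require the scale-invariant seminorm — it should be extracted from the quantitative bound \eqref{eq:3.2.5a}, exactly as in your sup-norm bookkeeping with $s<\alpha$ and $N>s$. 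In short, the paper's proof buys self-containedness (only kernel manipulations plus Lemma \ref{lem:3.2.3}), while yours trades the explicit convolution identity for an external Schauder-type theorem; with the two clarifications above, and a trivial adjustment of the splitting threshold so the two regions overlap, your argument closes.
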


\begin{proof}
We denote $P_y^\alpha(x)=\frac
{y^\alpha}{(|x|^2+y^2)^{\frac{N+\alpha}2}}$ the Possion kernel of
$(-\Delta)^{\frac \alpha 2}$, then
$$
w(x,y)=\int_{\mathbb R^N} \frac {y^\alpha}{(|x-z|^2+y^2)^{\frac
{N+\alpha}2}}u(z)\,dz.
$$
Direct calculation shows that for $i = 1,\cdots, N$,
\begin{eqnarray*}
\bigg|\frac{\partial w(x,y)}{\partial x_i}\bigg|&\leq &(N+\alpha)\int_{\mathbb R^N}
\frac{y^\alpha|x-z|}{(|x-z|^2+y^2)^{\frac {N+\alpha}2+1}}u(z)\,dz\\
&\leq & \frac{N+\alpha}{2y}\int_{\mathbb R^N} \frac
{y^\alpha}{(|x-z|^2+y^2)^{\frac {N+\alpha}2}}u(z)\,dz\\&=&\frac C y
w(x,y)
\end{eqnarray*}
Similarly, we have
\begin{eqnarray*}
\bigg|\frac{\partial w(x,y)}{\partial y}\bigg|&=&\bigg|\int_{\mathbb R^N} \frac{\alpha
y^{\alpha-1}}{(|x-z|^2+y^2)^{\frac
{N+\alpha}2}}u(z)\,dz\\
&-&(N+\alpha)\int_{\mathbb R^N}
\frac{y^{\alpha+1}}{(|x-z|^2+y^2)^{\frac {N+\alpha}2+1}}u(z)\,dz\bigg|\\
&=& \bigg|\int_{\mathbb R^N} \frac
{y^{\alpha-1}[\alpha|x-z|^2-Ny^2]}{(|x-z|^2+y^2)^{\frac
{N+\alpha}2+1}}u(z)\,dz\bigg|\\ &\leq & C\int_{\mathbb R^N} \frac
{y^{\alpha-1}}{(|x-z|^2+y^2)^{\frac {N+\alpha}2}}u(z)\,dz\\
&=&\frac Cy w(x,y).
\end{eqnarray*}
(i) follows by Lemma \ref{lem:3.2.2}.

For (ii), if we denote $P(x)=\frac 1{[1+|x|^2]^{\frac{N+\alpha}2}}$,
then $P_y^\alpha(x)=\frac 1{y^N}P(\frac xy)$. So we have
$$
w(x,y)=\int_{\mathbb R^N} \frac 1{y^N}P\bigg(\frac {x-z}y\bigg)u(z)\,dz.
$$
Hence, we have
\begin{eqnarray*}
\bigg|\frac{\partial w(x,y)}{\partial y}\bigg|&=&\bigg|\frac{\partial }{\partial y} \int_{\mathbb R^N} \frac
1{y^N}P(\frac {x-z}y)u(z)\,dz\bigg|\\&=&\bigg|\frac{\partial}{\partial y} \int_{\mathbb R^N}
P(\tilde z)u(x-y\tilde z)\,d \tilde z\bigg|\\&=& \bigg|\int_{\mathbb
R^N}P(\tilde z)\langle \nabla u(x-y\tilde z),\tilde z
\rangle\,d\tilde z\bigg|\\&=&\bigg|\int_{\mathbb R^N} P(\frac{x-z}y)\frac
1{y^N}\langle \frac{x-z}y,\nabla u(z)\rangle\,dz \bigg|\\
\end{eqnarray*}
By Lemma \ref{lem:3.2.3},
\begin{eqnarray*}
\bigg|\frac{\partial w(x,y)}{\partial y}\bigg|&\leq &|\int_{\mathbb R^N} \frac 1{y^N}P(\frac
{x-z}y)\frac{|x-z|}{y}\frac 1{|z|^{N-\alpha+1}}\,dz|\\&\leq&
|\int_{\mathbb R^N}
\frac{y^{\alpha}}{y^{N+\alpha}(1+\frac{|x-z|^2}{y^2})^{\frac
{N+\alpha}2}} \frac{|x-z|}y \frac
1{|z|^{N-\alpha+1}}\,dz|\\&=&|\int_{\mathbb R^N}
\frac{y^{\alpha-1}}{(y^2+|x-z|^2)^{\frac{N+\alpha-1}2}}\frac
1{|z|^{N-\alpha+1}}\,dz|\\&=& \frac
C{(|x|^2+y^2)^{\frac{N-(\alpha-1)}2}},
\end{eqnarray*}
where the last inequality holds since $\alpha\geq 1$ and the
$(\alpha-1)$-extension of $\frac 1{|x|^{N-\alpha+1}}$ is $\frac
1{(|x|^2+y^2)^{\frac{N-(\alpha-1)}2}}$. Similarly, we have
\begin{eqnarray*}
\bigg|\frac{\partial w(x,y)}{\partial x_i}\bigg|&\leq &|\int_{\mathbb R^N}
P(\frac{x-z}y)\frac 1{y^N}| \nabla u(z)|\,dz |\\&\leq
&|\int_{\mathbb R^N} \frac 1{y^N}P(\frac {x-z}y)\frac
1{|z|^{N-\alpha+1}}\,dz|
\\&=&|\int_{\mathbb R^N} P_y^\alpha(x-z)\frac
1{|z|^{N-\alpha+1}}\,dz|\\&\leq& \int_{\mathbb
R^N}\frac{y^{\alpha-1}}{[|x-z|^2+y^2]^{\frac{N+(\alpha-1)}2}}\frac{1}{|z|^{N-(\alpha-1)}}\,dz
\\&=& \frac
C{(|x|^2+y^2)^{\frac{N-(\alpha-1)}2}}.
\end{eqnarray*}
This proves (ii).

(iii) follows from the scaling invariance of $u_\varepsilon$ and $P$.

\end{proof}

Let $\varphi(x,y)$ be defined as in \eqref{eq:3.1.1a}, and let $u>0$ be a ground state solution of \eqref{eq:3.2.1},
which is radially symmetric. Denote by $w$ the extension of $u$. Set $u_\varepsilon(x) = \varepsilon^{\frac{\alpha - N}2}u(\frac x\varepsilon)$. Then, the extension $w_\varepsilon(x)$ of $u_\varepsilon(x)$ is given by $w_\varepsilon(x,y) = \varepsilon^{\frac{\alpha - N}2}w(\frac x\varepsilon, \frac y\varepsilon)$.

\begin{Lemma}\label{lem:3.2.5}
We have the following estimates

(i) $\|\varphi w_\varepsilon\|_{H_{0,L}^1(\mathcal{C}_\Omega)}^2\leq \|w_\varepsilon\|_{H_{0,L}^1(\mathcal{C}_\Omega)}^2+O(\varepsilon^{N-\alpha})$.

(ii) If $N> 2\alpha$, then $\|\varphi u_\varepsilon\|_{L^2(\Omega)}^2\geq
C\varepsilon^\alpha$, and if $N= 2\alpha$, $\|\varphi
u_\varepsilon\|_{L^2(\Omega)}^2\|\geq C\varepsilon^\alpha |\ln\varepsilon|$;

(iii) $\int_{\Omega} \frac{|\varphi
u_\varepsilon|^{2_\alpha^*(s)}}{|x|^s}\,dx=\int_{\mathbb R^N}
\frac{| u_\varepsilon|^{2_\alpha^*(s)}}{|x|^s}\,dx+O(\varepsilon^{N-s})$.

(iv) $\int_{\Omega} \frac{|\varphi u_\varepsilon|^{q}}{|x|^s}\,dx\geq
C\varepsilon^{\frac{\alpha-N}{2}q+N-s}$ for $q<2_\alpha^*(s)$.

(v) $\int_{\Omega} |\varphi u_\varepsilon|^r\geq
C\varepsilon^{\frac{(\alpha-N)r}{2}+N}$ for
$2<r<\frac{2N}{N-\alpha}$.

\end{Lemma}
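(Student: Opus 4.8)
The plan is to establish each of the five estimates by direct computation, using the decay laws for $w$, $u$, and $\nabla w$ obtained in Lemmas \ref{lem:3.2.2}, \ref{lem:3.2.3}, \ref{lem:3.2.4}, together with the scaling relations $u_\varepsilon(x) = \varepsilon^{(\alpha-N)/2}u(x/\varepsilon)$ and $w_\varepsilon(x,y) = \varepsilon^{(\alpha-N)/2}w(x/\varepsilon,y/\varepsilon)$. All five statements are of the same flavour: the error terms come from the cut-off $\varphi$, which equals $1$ on $B_{\rho/2}^+(0)$ and vanishes outside $B_\rho^+(0)$, so the difference between the global integral and the truncated one is controlled by an integral over the annular region $\{\rho/2 \le \rho_{xy} \le \rho\}$, where the profile $w$ and its derivatives decay like powers of $|(x,y)|$.

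First I would prove (i). Expanding $|\nabla(\varphi w_\varepsilon)|^2 = \varphi^2|\nabla w_\varepsilon|^2 + 2\varphi w_\varepsilon \nabla\varphi\cdot\nabla w_\varepsilon + w_\varepsilon^2|\nabla\varphi|^2$ and weighting by $y^{1-\alpha}$, the first term is $\|w_\varepsilon\|^2_{H^1_{0,L}}$ minus the tail $\int_{\{\rho_{xy}\ge \rho/2\}}y^{1-\alpha}|\nabla w_\varepsilon|^2$; using Lemma \ref{lem:3.2.4}(i), $|\nabla w_\varepsilon|^2 \le C\varepsilon^{\alpha-N}\rho_{xy}^{-2}|(x/\varepsilon,y/\varepsilon)|^{-2(N-\alpha)}\cdot(\text{scaling})$, and after the change of variables $(x,y)\mapsto \varepsilon(x,y)$ this tail is $O(\varepsilon^{N-\alpha})$ provided $N>\alpha$; similarly the cross term and the $|\nabla\varphi|^2$ term, supported in the fixed annulus, are $O(\varepsilon^{N-\alpha})$ by the pointwise bound $w_\varepsilon \le C\varepsilon^{(N-\alpha)/2}$ there. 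This is essentially Lemma \ref{lem:3.1.1}(i) of \cite{BCP} with $u_\varepsilon$ replaced by the Hardy–Sobolev profile, and the decay rate $N-\alpha$ of $w$ is exactly what makes it work.

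For (ii), on $B_{\rho/2}(0)$ one has $\varphi\equiv 1$, so $\int_\Omega|\varphi u_\varepsilon|^2\,dx \ge \int_{B_{\rho/2}(0)}u_\varepsilon^2\,dx = \varepsilon^{\alpha-N}\int_{B_{\rho/2}(0)}u(x/\varepsilon)^2\,dx = \varepsilon^\alpha\int_{B_{\rho/(2\varepsilon)}(0)}u(z)^2\,dz$; inserting the lower bound \eqref{eq:3.2.5a1}, $u(z)^2 \ge C(1+|z|^2)^{-(N-\alpha)}$, the radial integral $\int_1^{\rho/(2\varepsilon)}r^{N-1-2(N-\alpha)}\,dr$ converges as $\varepsilon\to 0$ when $N>2\alpha$ (giving $\ge C\varepsilon^\alpha$) and diverges logarithmically like $|\ln\varepsilon|$ when $N=2\alpha$. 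For (iii) one estimates $|\int_{\mathbb{R}^N}\frac{u_\varepsilon^{2^*_\alpha(s)}}{|x|^s} - \int_\Omega\frac{|\varphi u_\varepsilon|^{2^*_\alpha(s)}}{|x|^s}| \le C\int_{\mathbb{R}^N\setminus B_{\rho/2}(0)}\frac{\varepsilon^N}{|x|^s(\varepsilon^2+|x|^2)^{N-s}}\,dx$ (using $\frac{(N-\alpha)}{2}\cdot 2^*_\alpha(s) = N-s$ and Lemma \ref{lem:3.2.2} to bound $u_\varepsilon$ by the standard bubble), which after scaling is $\varepsilon^{N-s}\int_{\rho/(2\varepsilon)}^\infty r^{N-1-s-2(N-s)}\,dr = O(\varepsilon^{N-s})$. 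Items (iv) and (v) are lower bounds of the same type as (iii) in Lemma \ref{lem:3.1.1}: restrict the integral to $B_{\rho/2}(0)$ where $\varphi\equiv 1$, use $u_\varepsilon(x) = \varepsilon^{(\alpha-N)/2}u(x/\varepsilon) \ge C\varepsilon^{(\alpha-N)/2}(1+|x/\varepsilon|^2)^{-(N-\alpha)/2}$ from \eqref{eq:3.2.5a1}, scale, and observe the resulting integral $\int_{B_{\rho/(2\varepsilon)}(0)}\frac{dz}{|z|^s(1+|z|^2)^{(N-\alpha)q/2}}$ (resp. without the $|z|^{-s}$ weight and with exponent $(N-\alpha)r/2$) is bounded below by a positive constant since $q<2^*_\alpha(s)$ (resp. $r<2^*_\alpha$) makes it convergent; this yields the claimed powers $\varepsilon^{\frac{\alpha-N}{2}q+N-s}$ and $\varepsilon^{\frac{(\alpha-N)r}{2}+N}$.

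The main obstacle is item (ii) in the borderline case $N=2\alpha$ and, more broadly, making sure the power-counting and the logarithmic threshold come out correctly: one must track the lower bound \eqref{eq:3.2.5a1} carefully (it is only available because of Lemma \ref{lem:3.2.3}, which in turn rests on the Kelvin transform and Hölder continuity of $v$), and verify that the tail contributions genuinely do not dominate. Everything else is routine once the decay and growth estimates of Lemmas \ref{lem:3.2.2}--\ref{lem:3.2.4} are in hand; in particular (i), (iii), (iv), (v) are direct transcriptions of the arguments already carried out for the pure Sobolev bubble in \cite{BCP} and in Lemma \ref{lem:3.1.1} above, with the explicit profile $U_\varepsilon$ replaced by the implicitly-defined ground state, for which the pointwise two-sided bounds serve as a complete substitute.
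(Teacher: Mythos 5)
Your overall strategy coincides with the paper's: expand $|\nabla(\varphi w_\varepsilon)|^2$, control the error terms on the fixed annulus $\{\frac\rho2\le\rho_{xy}\le\rho\}$ via the decay laws of Lemmas \ref{lem:3.2.2}--\ref{lem:3.2.4} and the scaling $w_\varepsilon=\varepsilon^{\frac{\alpha-N}2}w(\cdot/\varepsilon)$, and obtain (ii)--(v) by restricting to $B_{\rho/2}(0)$ where $\varphi\equiv1$, rescaling, and inserting the lower bound \eqref{eq:3.2.5a1}; your treatment of (ii) (including the logarithm at $N=2\alpha$), (iv) and (v) is exactly the paper's.

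The one step that does not close as written is the cross term in (i). You assert that $\int y^{1-\alpha}w_\varepsilon\varphi|\nabla w_\varepsilon||\nabla\varphi|\,dxdy=O(\varepsilon^{N-\alpha})$ ``by the pointwise bound $w_\varepsilon\le C\varepsilon^{\frac{N-\alpha}2}$ there'', but a bound on $w_\varepsilon$ alone says nothing about $|\nabla w_\varepsilon|$, and the only gradient estimate you invoke, Lemma \ref{lem:3.2.4}(i) (namely $|\nabla w|\le Cw/y$), turns the integrand into $Cy^{-\alpha}\varepsilon^{N-\alpha}$ on the annulus, and $y^{-\alpha}$ is not integrable near $y=0$ once $1\le\alpha<2$. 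This is precisely why the paper splits cases: for $0<\alpha<1$ it uses Lemma \ref{lem:3.2.4}(i), while for $1\le\alpha<2$ it must use Lemma \ref{lem:3.2.4}(ii), $|\nabla w|\le C|(x,y)|^{-(N-\alpha+1)}$, which in turn rests on the gradient decay of $u$ from Lemma \ref{lem:3.2.3}; you need to add this case distinction to make (i) valid on the whole range $0<\alpha<2$. Separately, in (iii) the numerator of your intermediate bound should be $\varepsilon^{N-s}$, not $\varepsilon^{N}$, since $\frac{N-\alpha}2\cdot 2^*_\alpha(s)=N-s$, and after scaling the tail is simply $\int_{\{|z|\ge\rho/(2\varepsilon)\}}|z|^{-(2N-s)}\,dz=O(\varepsilon^{N-s})$ with no extra prefactor; the rate $O(\varepsilon^{N-s})$ you state is nevertheless the correct one and agrees with the paper.
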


\begin{proof}
We prove (i) first. Note that
\begin{eqnarray*}
\|\varphi w_\varepsilon\|_{H_{0,L}^1(\mathcal{C}_\Omega)}^2&=&\int_{\mathcal{C}_\Omega}
y^{1-\alpha}(\varphi^2|\nabla w_\varepsilon|^2+|w_\varepsilon|^2|\nabla
\varphi|^2+2w_\varepsilon\varphi\langle \nabla w_\varepsilon, \nabla
\varphi\rangle)\,dxdy\\&\leq& \|w_\varepsilon\|_{H_{0,L}^1(\mathcal{C}_\Omega)}^2+\int_{\mathcal{C}_\Omega}
y^{1-\alpha}|w_\varepsilon|^2|\nabla \varphi|^2\,dxdy+2\int_{\mathcal{C}_\Omega}
y^{1-\alpha}w_\varepsilon\varphi |\nabla w_\varepsilon||\nabla
\varphi|\,dxdy.
\end{eqnarray*}
By Lemma \ref{lem:3.2.2},
\begin{eqnarray*}
&&\int_{\mathcal{C}_\Omega} y^{1-\alpha}|w_\varepsilon|^2|\nabla \varphi|^2\,dxdy\\&\leq &C\int_{\{\frac
r2\leq \rho_{xy}\leq \rho\}}y^{1-\alpha}w_\varepsilon^2\\&\leq &C
\varepsilon^{N-\alpha}\int_{\{\frac \rho2\leq \rho_{xy}\leq
\rho\}}y^{1-\alpha}\rho_{xy}^{2(\alpha-N)}\,dxdy\\&=& O(\varepsilon^{N-\alpha}).
\end{eqnarray*}
Similarly,
\begin{eqnarray*}
&&\int_{\mathcal{C}_\Omega} y^{1-\alpha}w_\varepsilon\varphi |\nabla w_\varepsilon||\nabla \varphi|\,dxdy\\
&\leq & C\int_{\{\frac \rho2\leq \rho_{xy}\leq
\rho\}} y^{1-\alpha}w_\varepsilon|\nabla w_\varepsilon|\,dxdy\\
&\leq&\varepsilon^{\alpha-N-1}\int_{\{\frac \rho2\leq \rho_{xy}\leq
\rho\}}y^{1-\alpha}|w(\frac x\varepsilon,\frac y\varepsilon)||\nabla w(\frac x\varepsilon,\frac y\varepsilon)|\,dxdy\\
&=& \varepsilon\int_{\{\frac \rho{2\varepsilon}\leq \rho_{xy}\leq\frac
\rho\varepsilon\}}y^{1-\alpha}w(x,y)|\nabla w(x,y)|\,dxdy.
\end{eqnarray*}
For $(x,y)\in \{(x,y):\frac \rho{2\varepsilon}\leq \rho_{xy}\leq\frac
\rho\varepsilon\}$, we have
$$
w(x,y)\leq \frac C{(\frac \rho\varepsilon)^{N-\alpha}} =C\varepsilon^{N-\alpha}.
$$
By (i) of Lemma \ref{lem:3.2.4}, if $0<\alpha<1$,
$$
\int_{\mathcal{C}_\Omega} y^{1-\alpha}w_\varepsilon\varphi |\nabla w_\varepsilon||\nabla \varphi|\,dxdy\leq
\varepsilon^{2(N-\alpha)+1}\int_{\{\frac \rho{2\varepsilon}\leq \rho_{xy}\leq\frac
\rho\varepsilon\}}
y^{-\alpha}\,dxdy=O(\varepsilon^{N-\alpha}).
$$
If $1\leq \alpha<2$, we deduce from (ii) of Lemma \ref{lem:3.2.4} that
$$
\int_{\mathcal{C}_\Omega} y^{1-\alpha}w_\varepsilon\varphi |\nabla w_\varepsilon||\nabla \varphi|\,dxdy\leq
\varepsilon^{2(N-\alpha+1)}\int_{\{\frac \rho{2\varepsilon}\leq \rho_{xy}\leq\frac
\rho\varepsilon\}}
y^{1-\alpha}\,dxdy=O(\varepsilon^{N-\alpha}).
$$
The assertion (i) follows.

Next, we show (ii). By Lemma \ref{lem:3.2.3},
\begin{eqnarray*}
\int_{\Omega}|\varphi u_\varepsilon|^2\,dx&\geq & \int_{\{|x|<\frac
\rho2\}}u_\varepsilon^2\,dx\\&=& \int_{\{|x|\leq
\varepsilon\}}u_\varepsilon^2\,dx+\int_{\{\varepsilon\leq |x|\leq
\frac \rho2\}}u_\varepsilon^2\,dx\\&= &\int_{\{|x|\leq
\varepsilon\}}\varepsilon^{\alpha-N}u\Big(\frac
x\varepsilon\Big)^2\,dx+\int_{\{\varepsilon\leq |x|\leq \frac
\rho2\}}\varepsilon^{\alpha-N}u\Big(\frac
x\varepsilon\Big)^2\,dx\\&=&
C\varepsilon^\alpha+\varepsilon^\alpha\int_{\{1<|x|<\frac
\rho{2\varepsilon}\}}u^2(x)\,dx\\&\geq &
C\varepsilon^\alpha+C\varepsilon^\alpha\int_1^{\frac
\rho{2\varepsilon}}\frac 1{t^{N-2\alpha+1}}\,dt.
\end{eqnarray*}
If $N> 2\alpha$, we have
$$
\int_1^{\frac r{2\varepsilon}}\frac
1{t^{N-2\alpha+1}}\,dt=C\varepsilon^{N-2\alpha}-C',
$$
which implies
$$
\int_{\Omega}|\varphi u_\varepsilon|^2\,dx \geq
C\varepsilon^{\alpha}+C\varepsilon^{\alpha}[\varepsilon^{N-2\alpha}-1]=O(\varepsilon^\alpha).
$$
If $N=2\alpha$, the fact
$$
\int_1^{\frac r{2\varepsilon}}\frac
1{t^{N-2\alpha+1}}\,dt=|\ln\varepsilon|+\ln \frac r2
$$
yields
$$
\int_{\Omega}|\varphi u_\varepsilon|^2\,dx \geq
C\varepsilon^{\alpha}+C\varepsilon^{\alpha}[\ln \frac r2+|\ln
\varepsilon|]=O(\varepsilon^\alpha|\ln \varepsilon|).
$$
This proves (ii).

Now, for (iii), since $s<\alpha$, we have
\begin{eqnarray*}
&&\Big|\int_{\Omega} \frac{| u_\varepsilon|^{\frac{2(N-s)}{N-\alpha}}}{|x|^s}\,dx-\int_{\mathbb{R}_+^N}
\frac{|\varphi u_\varepsilon|^{\frac{2(N-s)}{N-\alpha}}}{|x|^s}\,dx\Big|\\
&\leq&\int_{\{|x|\geq \frac \rho2\}}\frac{|
u_\varepsilon|^{\frac{2(N-s)}{N-\alpha}}}{|x|^s}\,dx\\
&=&\int_{\{|x|>\frac \rho2\}}\frac{\varepsilon^{-(N-s)}|u(\frac x\varepsilon)|^{\frac{2(N-s)}{N-\alpha}}}{|x|^s}\,dx\\
&=& \int_{\{|x|\geq \frac \rho{2\varepsilon}\}}\frac{|u(x)|^{\frac{2(N-s)}{N-\alpha}}}{|x|^s}\,dx\\
&\leq& C\int_{\{|x|\geq \frac \rho{2\varepsilon}\}}\frac
1{|x|^{2N-s}}\,dx\\&=&C\varepsilon^{N-s}.
\end{eqnarray*}
Similarly, (iv) follows by
\begin{eqnarray*}
&&\int_{\mathbb R^N} \frac{|\varphi u_\varepsilon|^{q}}{|x|^s}\,dx
\\&\geq& \int_{\{|x|<\frac
\rho2\}}\frac{\varepsilon^{\frac{(\alpha-N)q}{2}}|u(\frac
x\varepsilon)|^q}{|x|^s}\,dx\\&=&\varepsilon^{\frac{(\alpha-N)q}{2}+N}\int_{\{|x|<\frac
\rho{2\varepsilon}\}}\frac{|u(x)|^q}{|x|^s}\,dx\\
&\geq&C\varepsilon^{\frac{\alpha-N}{2}q+N-s}.
\end{eqnarray*}

Finally, a direct calculation shows that
\begin{eqnarray*}
\int_{\Omega} |\varphi u_\varepsilon|^r\,dx &\geq&\int_{|x|<\frac \rho
2}|u_\varepsilon|^r\\&=&\int_{\{|x|<\frac \rho
2\}}\varepsilon^{\frac{(\alpha-N)r}{2}}|u\Big(\frac
x\varepsilon\Big)|^r\,dx\\&=&\int_{\{|x|<\frac
\rho{2\varepsilon}\}}\varepsilon^{\frac{(\alpha-N)r}{2}+N}|u(x)|^r\,dx\\&\geq
& C\varepsilon^{\frac{(\alpha-N)r}{2}+N},
\end{eqnarray*}
which implies (v). The proof is complete.
\end{proof}

\begin{proof}[{\bf Proof of $(i)$ and $(ii)$ of  Theorem \ref{thm:1.3}:}] \ \ Without loss of generality, we
may assume that $\mu=1$. Let
\[
c_\infty = \inf\{I_\infty(w): w\in \mathcal{D}^{1,2}(y^{1-\alpha},\mathbb{R}^{N+1}_+)\setminus\{0\}, \langle I_{\infty}'(w), w\rangle = 0\},
\]
where
\[
I_\infty(w) = \frac 12\int_{\mathbb{R}^{N+1}_+}y^{1-\alpha}|\nabla w(x,y)|^2\,dxdy-\frac
1{2_\alpha^*(s)}\int_{\mathbb{R}^{N}}\frac{|w(x,0)|^{2_\alpha^*(s)}}{|x|^s}\,dx.
\]
Since $S_{s,\alpha}^E$ is achieved by a radially symmetric function
$u$, $c_\infty$ is achieved by the extension $w$ of $u$, and $w$ is
the ground state solution of \eqref{eq:3.2.2}.  Hence,
$$
c_\infty=\frac{\alpha-s}{2(N-s)}(S_{s,\alpha}^E)^{\frac{N-s}{\alpha-s}}.
$$
We claim that
$I$ satisfies $(PS)_c$ condition for $c\in (0,c_\infty)$, that is, for $c\in (0,c_\infty)$ and any sequence $\{w_n\}\subset H_{0,L}^1(\mathcal{C}_\Omega)$ such that
\begin{equation}\label{eq:3.2.6}
I(w_n)\to c,\quad I'(w_n)\to 0,
\end{equation}
$\{w_n\}$ possesses a convergent subsequence. We first show that $\{w_n\}$ is uniformly bounded
 in $ H_{0,L}^1(\mathcal{C}_\Omega)$. It is easily done for $r = 2$ since $\lambda<\lambda_1$.
  For the case $2<r<2_\alpha^*=\frac{2N}{N-\alpha}$, by \eqref{eq:3.2.6}, we deduce that if $r<2_\alpha^*(s)$,
$$
(\frac 12-\frac 1r)\int_{\mathcal{C}_\Omega}y^{1-\alpha}|\nabla
w(x,y)|^2\,dxdy+(\frac 1r-\frac
1{2_\alpha^*})\int_{\Omega}\frac{|w(x,0)|^{2_\alpha^*(s)}}{|x|^s}\,dx\leq
c+o(1)\|w_n\|_{H_{0,L}^1(\mathcal{C}_\Omega)},
$$
and if
$r\geq 2_\alpha^*(s)$, we have
$$
(\frac 12-\frac 1{2_\alpha^*})\int_{\mathcal{C}_\Omega}y^{1-\alpha}|\nabla
w(x,y)|^2\,dxdy+(\frac 1{2_\alpha^*}-\frac
1r)\int_{\Omega}|w(x,0)|^{r}\,dx\leq c+o(1)\|w_n\|_{H_{0,L}^1(\mathcal{C}_\Omega)}.
$$
Hence, in both cases, $\{\|w_n\|_{H_{0,L}^1(\mathcal{C}_\Omega)}\}$ is bounded. We may show that $\{w_n\}$ is tight as \cite{BCP}, and we may assume that, up to a subsequence,
\[
w_n\rightharpoonup w\quad {\rm in}\quad
{H_{0,L}^1(\mathcal{C}_\Omega)}\,\ {\rm and}\,\ L^q(\Omega, \frac
1{|x|^s}), \quad w_n(x,0)\to w(x,0)\quad{\rm in}\quad L^r(\Omega).
\]
Now we show  that $w_n\to w$ strongly in
$H_{0,L}^1(\mathcal{C}_\Omega)$. Suppose on the contrary,
$z_n=w_n-w\not \to 0$ in $H_{0,L}^1(\mathcal{C}_\Omega)$. It follows
from the Brezis-Lieb Lemma \cite{BL} that
$$
\frac 12\int_{\mathcal{C}_\Omega}y^{1-\alpha}|\nabla z_n(x,y)|^2\,dxdy-\frac
1{2_\alpha^*(s)}\int_{\Omega}\frac{|z_n(x,0)|^{2_\alpha^*(s)}}{|x|^s}\,dx=
c+o(1)-I(w)
$$
and
$$
\int_{\mathcal{C}_\Omega}y^{1-\alpha}|\nabla z_n(x,y)|^2\,dxdy-
\int_\Omega\frac{|z_n(x,0)|^{2_\alpha^*(s)}}{|x|^s}\,dx= o(1).
$$
By the definition of $S_{s,\alpha}^E$,
\[
\begin{split}
&\int_{\mathcal{C}_\Omega}y^{1-\alpha}|\nabla z_n(x,y)|^2\,dxdy\\
&\geq
S_{s,\alpha}^E \big(
\int_{\Omega}\frac{|z_n(x,0)|^{2_\alpha^*(s)}}{|x|^s}\,dx \big
)^{\frac
2{2_\alpha^*}}\\&=S_{s,\alpha}^E(\int_{\mathcal{C}_\Omega}y^{1-\alpha}|\nabla
z_n(x,y)|^2\,dxdy+o(1))^{\frac 2{2_\alpha^*(s)}},
\end{split}
\]
which implies by the fact $z_n\not \to 0$ that
$$
\int_{\mathcal{C}_\Omega}y^{1-\alpha}|\nabla z_n(x,y)|^2\,dxdy\geq
(S_{s,\alpha}^E)^{\frac{N-s}{\alpha-s}}+o(1).
$$
We then conclude that
$$
c+o(1)-I(w)=(\frac
12-\frac{1}{2_\alpha^*(s)})\int_{\mathcal{C}_\Omega}y^{1-\alpha}|\nabla
z_n(x,y)|^2\,dxdy+o(1)\geq
\frac{\alpha-s}{2(N-s)}(S_{s,\alpha}^E)^{\frac{N-s}{\alpha-s}}+o(1),
$$
this contradicts to that $c\in
(0,\frac{\alpha-s}{2(N-s)}(S_{s,\alpha}^E)^{\frac{N-s}{\alpha-s}})$
since $I(w)\geq 0$. Hence, we have $w_n\to w$, i.e., the $(PS)_c$
condition holds.

Next, it is easy to see that the functional $I$ has the Mountain
Pass structure. In order to find critical points of $I$, it is
sufficient to show that the Mountain Pass level of $I$ is below
$c_\infty$. For this purpose, we define
$$
f_\infty(t)=
\frac{t^2}2\|w_\varepsilon\|_{\mathcal{D}^{1,2}(y^{1-\alpha},\mathbb{R}^{N+1}_+)}^2-\frac{t^{2_\alpha^*(s)}}{2_\alpha^*(s)}\int_{\mathbb
R^N} \frac {|u_\varepsilon|^{2_\alpha^*(s)}}{|x|^s}\,dx.
$$
We may show that the maximum of $f_\infty$ is achieved at $\bar t>0$ and
$$
f_\infty(\bar t)=c_\infty.
$$
Moreover, $\bar t>0$ can be worked out explicitly, see arguments in
the proof of Theorem \ref{thm:1.4}. Let $f(t)=I(t(\varphi
w_\varepsilon))$. The maximum of the function $f(t)$ is also
achieved at some $t_\varepsilon>0$, and $t_\varepsilon\to \bar t$ as
$\varepsilon\to 0$. If $N\geq 2\alpha$, we infer from Lemma
\ref{lem:3.2.5} that
$$
f(t_\varepsilon)\leq \frac
{t^2_\varepsilon}2(\|w_\varepsilon\|_{H_{0,L}^1(\mathcal{C}_\Omega)}^2+O(\varepsilon^{N-\alpha}))-C\frac
\lambda r t_\varepsilon^r
\varepsilon^{\frac{(\alpha-N)r}{2}+N}-\frac{t_\varepsilon^{2_\alpha^*(s)}}{2_\alpha^*(s)}(\int_{\Omega}
\frac
{(u_\varepsilon)^{2_\alpha^*(s)}}{|x|^s}+O(\varepsilon^{N-s}))\,dx.
$$
So we have
$$
f(t_\varepsilon)\leq \frac{\bar t^2}2\|w_\varepsilon\|_{H_{0,L}^1(\mathcal{C}_\Omega)}^2-\frac{\bar
t^{2_\alpha^*(s)}}{2_\alpha^*(s)}\int_{\mathbb R^N} \frac
{(u_\varepsilon)^{2_\alpha^*(s)}}{|x|^s}\,dx+C\varepsilon^{N-\alpha}-C\varepsilon^{\frac{(\alpha-N)r}{2}+N}-C\varepsilon^{N-s}.
$$
Since $N>\alpha+\frac {2\alpha}r$, we can choose $\varepsilon>0$ small enough
such that
$$
f(t_\varepsilon)<c_\infty.
$$
This proves (i) of Theorem \ref{thm:1.3}

Now, we prove (ii) of Theorem \ref{thm:1.3}. If $N>2\alpha$, we have as before that
$$
f(t_\varepsilon)\leq \frac{\bar t^2}2\|w_\varepsilon\|^2-\frac{\bar
t^{2_\alpha^*(s)}}{2_\alpha^*(s)}\int_{\mathbb R^N} \frac
{(u_\varepsilon)^{2_\alpha^*(s)}}{|x|^s}\,dx+C\varepsilon^{N-\alpha}-C\varepsilon^{\alpha}-C\varepsilon^{N-s},
$$
which yields
$$
f(t_\varepsilon)<c_\infty
$$
for $\varepsilon$ small enough.

If $N=2\alpha$, similarly we have
$$
f(t_\varepsilon)\leq \frac{\bar t^2}2\|w_\varepsilon\|^2-\frac{\bar
t^{2_\alpha^*(s)}}{2_\alpha^*(s)}\int_{\mathbb R^N} \frac
{(u_\varepsilon)^{2_\alpha^*(s)}}{|x|^s}\,dx+C\varepsilon^{N-\alpha}-C\varepsilon^{\alpha}|\ln
\varepsilon|-C\varepsilon^{N-s},
$$
implying
$$
f(t_\varepsilon)<\infty
$$
for $\varepsilon>0$ small enough. The proof is complete.

\end{proof}

\vspace{2mm} \noindent{\bf Acknowledgment} J. Yang  was supported by
NNSF of China, No:11271170, 11371254;  and GAN PO 555 program of Jiangxi.

\end{document}